\newcommand{\BB}{{\mathcal B}}
\newcommand{\EE}{{\mathcal E}}
\newcommand{\FF}{{\mathcal F}}
\newcommand{\BE}{{\mathbb E}}
\newcommand{\BF}{{\mathbb F}}
\newcommand{\BM}{{\mathbb M}}
\newcommand{\BP}{{\mathbb P}}
\newcommand{\BR}{{\mathbb R}}
\newcommand{\DDr}{{\mathscr D}}
\newcommand{\sgn}{\mbox{\rm sgn}}
\newcommand{\fch}{{\mathbf{1}}}
\newtheorem{theorem}{\bf Theorem}[section]
\newtheorem{proposition}[theorem]{\bf Proposition}
\newtheorem{lemma}[theorem]{\bf Lemma}
\newtheorem{corollary}[theorem]{\bf Corollary}
\theoremstyle{definition}
\newtheorem{definition}[theorem]{Definition}
\newtheorem{example}[theorem]{\bf Example}
\newtheorem{remark}[theorem]{Remark}
\numberwithin{equation}{section}
\begin{document}

\title {Nonlinear  Hardy--Stein type  identities for harmonic functions relative to symmetric integro-differential operators}
\author {Tomasz Klimsiak and Andrzej Rozkosz
\\
{\small Faculty of Mathematics and Computer Science,
Nicolaus Copernicus University} \\
{\small  Chopina 12/18, 87--100 Toru\'n, Poland}}
\date{}
\maketitle
\begin{abstract}
We show identities of Hardy--Stein type  for harmonic functions relative to integro-differential  operators corresponding to general symmetric regular Dirichlet forms satisfying the absolute continuity condition. The novelty is that we consider operators of mixed type containing both local and nonlocal component. Moreover, the identities are proved for  compositions of harmonic functions and general convex functions. We also provide some  conditional identities, i.e.  identities for ratios of harmonic functions. As an application we give a characterization of norms in harmonic Hardy spaces and prove  Littlewood--Paley type estimates for square functions. To illustrate general results, we  discuss in some details the case of divergence  form operator and  purely nonlocal operator defined by some jump kernel. Our proofs are rather short and use mainly probabilistic methods.
\end{abstract}
{\small \noindent{\bf Mathematics Subject Classification (2020).} Primary 31C05, 31C25; Secondary 60J45.}

\footnote{{\em E-mail addresses}: {\tt tomas@mat.umk.pl} (T. Klimsiak), {\tt rozkosz@mat.umk.pl} (A. Rozkosz)}

\section{Introduction}
\label{sec1}

Let $E$ be a locally compact separable metric space and $m$ be a Radon measure on $E$ with full support. We consider a symmetric regular Dirichlet form
$(\EE,\DDr(\EE))$ on $L^2(E;m)$.
It is well known that it  admits the unique 
Beurling--Deny decomposition
\begin{equation}
\label{eq1.1}
\EE(u,v)=\EE^{(c)}(u,v)+\EE^{(j)}(u,v)+\int_Euv\,d\kappa,
\quad u,v\in \DDr(\EE)\cap C_c(E),
\end{equation}
where  $\EE^{(c)}$ is a symmetric form on  $\DDr(\EE) \cap C_c(E)$
satisfying  the strong local property, 
$J$ is a symmetric positive Radon measure (called jumping measure) on 
$E\times E\setminus \frak d$, 
where $\frak d=\{(x,x): x\in E\}$, and $\kappa$ (called killing measure) is a positive Radon measure on $E$. 

In the present paper, we are interested in Hardy--Stein type identities for functions of the form $\varphi(u)$, where  $\varphi:\BR\to\BR_+$ is  a convex function
and $u$ is a general harmonic function relative to $\EE$ in some open subset $D$ of $E$.
We  denote by  $\varphi'_{-}$ the left derivative of $\varphi$ and by  $\mu_{\varphi}$ the  measure being the distributional derivative of $\varphi'_{-}$. An  important subclass of convex functions we consider consists of $\varphi$ such that   
\begin{equation}
\label{eq1.2}
\mu_\varphi(dx)=g(x)\,dx\quad\mbox{for some }\quad g\in\mathcal B^+(\BR).	
\end{equation}
We define harmonic functions probabilistically as functions having some mean value property.
In many interesting cases, however,  they can be described analytically as functions $u$  such that
\begin{equation}
	\label{eq1.8}
	\EE(u, v)=0 \quad \text { for every } v \in C_c(D) \cap \mathscr D(\EE).
\end{equation}
Model examples  of convex functions are
\[
\varphi(x)=|x|,\qquad \mu_{\varphi}(dx)=2\delta_0,
\]
where $\delta_0$ is the Dirac measure concentrated at point $0$, and
\begin{equation}
\label{eq1.3}
\varphi(x)=|x|^{p},\qquad
g(x)=p(p-1)\mbox{\rm sgn}(x)\fch_{\{|x|>0\}} |x|^{p-2},\quad p>1,
\end{equation}
where $\mbox{sgn}(x)=\mathbf1_{(0,\infty)}(x)-\mathbf1_{(-\infty,0]}(x)$, i.e. $\sgn$ is the left derivative of $x\mapsto|x|$.

Assume that $\EE$ satisfies the absolute continuity condition (see Section \ref{sec2}) and  for any relatively compact open set $U$ such that $\bar U\subset D$ 
(written $U\subset\subset D$) the part of  $\EE$ on $U$ is transient.
The aim of the paper is to show that under these additional assumptions,   if $u$ is a  harmonic function in $D$ 
and $\varphi$ is a convex function satisfying (\ref{eq1.2}), then for any 
$U\subset\subset  D$ and  $x\in U$,
\begin{align}
\label{eq1.4}
\int_{U^c}\varphi(u(z))\,P_U(x,dz)&=\varphi(u(x))+\frac12
	\int_UG_U(x,z)g(u(z))\,\mu^c_{\langle u\rangle}(dz) \nonumber\\
	&\quad+2\int_UG_U(x,z)\int_{E}F_\varphi(u(z),u(y))\,J(dz,dy)\nonumber\\
	&\quad+\int_UG_U(x,z)\int_{E}F_{\varphi}(u(z),0)\,\kappa(dz).
\end{align}
In (\ref{eq1.4}), $P_U(x,dz)$ is the Poisson kernel for the set $U$ (relative to $\EE$),
$G_U$ is the Green function for $U$  (relative to $\EE$), $\mu_{\langle u\rangle}^c$
is the local part of the energy measure of $u$, i.e. a  unique positive Radon measure on $E$ which for bounded $u$ is defined by the equation 
\[
\int_E f\,d\mu_{\langle u\rangle}^c=2\mathcal{E}^{(c)}(u, uf)-\mathcal{E}^{(c)}(u^2, f),
\quad f \in \mathscr{D}(\EE) \cap C_c(U). 
\]
Finally, $F_\varphi:\BR\times\BR\rightarrow\BR$ is the so-called Bregman divergence defined  by
\[
F_\varphi(a,b)=\varphi(b)-\varphi(a)-\varphi'_{-}(a)(b-a)
\quad a,b\in\BR.
\]
In the general case (\ref{eq1.4}) holds with the term involving $g$  replaced by 
\begin{equation}
\label{eq1.7}
\frac12\int_{\BR}\int_UG_U(x,z)\,l_a(dz)\,\mu_\varphi(da),
\end{equation}
where $\{l_a,a\in\BR\}$  is a family of smooth measures on $E$ uniquely determined by $\EE$ and  $u$  (we describe it more precisely in Section \ref{sec3}). 

Results of type (\ref{eq1.4}) were proved in \cite{BDL,BGPPR2} for some special forms and $\varphi(x)=|x|^p$ with $p>1$. In this case
\[
F_{\varphi}(a,b)=F_p(a,b):=|b|^p-|a|^p-p\,\sgn(a)|a|^{p-1}(b-a),
\quad  F_p(a,0)=(p-1)|a|^p.
\] 
In the course of writing this  paper we learned about the paper  \cite{BKPP}, where the  formula (\ref{eq1.4}) is proved for the form corresponding to a L\'evy operator and some class of convex functions.

Let $L$ denote the generator of $\EE$, that is $(L,\DDr(L))$ is the unique nonpositive definite self-adjoint operator on $L^2(E;m)$ such that 
\begin{equation}
\label{eq1.5}
\DDr(L)\subset\DDr(\EE),\qquad \EE(u,v)=(-Lu,v),\quad u\in\DDr(L),\,v\in\DDr(\EE).
\end{equation}
In \cite{BDL} separately two cases are considered: $L=\Delta$ (Laplace operator, i.e. $\EE^{(j)}=0$, $\kappa=0$) and $L=\Delta^{\alpha/2}$ with $\alpha\in(0,2)$ (fractional Laplace operator, i.e. $\EE^{(c)}=0$, $\kappa=0$). In \cite{BGPPR2} (see also \cite{BGPPR1}) the results of \cite{BDL} for fractional Laplace operator are generalized to some pure jump forms with no killing (see Section \ref{sec5} for more details). The paper \cite{BKPP} essentially covers the case where $\kappa=0$ and  $\EE^{(c)}+\EE^{(j)}$ corresponds to a symmetric L\'evy operator whose model example is $\Delta-(-\Delta)^{\alpha/2}$. As for $\varphi$, in \cite{BKPP} it is assumed that it is a moderate Young function with absolutely continuous derivative.

Our results are much more general. For instance, they cover the case $p=1$ as well as non-moderate Young functions $\varphi$, and large class of integro-differential operators $L$.  
In the paper, we  also prove identities of type (\ref{eq1.4}) for functions of the form $\varphi(u/h)$, where  $u,h$ are singular harmonic in $D$ (i.e. $u=h=0$ on $D^c$), $h>0$ and   
$\varphi$ satisfies  (\ref{eq1.2}). We call them conditional Hardy--Stein identities. They  generalize considerably the corresponding results proved earlier in \cite{BDL}. 

Our proofs are probabilistic in nature. 
We use in an essential way the It\^o--Meyer  formula, the concept of L\'evy system for $\BM$ and some general results from the probabilistic potential theory (notably the Fukushima decomposition, Revuz duality, probabilistic interpretation of Beurling-Deny formulae  and fine topology).

The main new feature of the paper can be described concisely  as follows.
\begin{itemize}
\item Our results are proved for general harmonic functions (not only for a subclass of such functions satisfying (\ref{eq1.8})) and for general convex functions.
	
\item We consider quite general regular forms  $(\EE,\mathscr D(\EE))$. Our additional assumptions on $(\EE,\mathscr D(\EE))$
are minimal in order to guarantee the existence of the Green function $G_U$ for any $U\subset\subset D$.

\item We provide  a unified method for proving  Hardy--Stein type  identities  and their conditional versions in the very general framework.  Nevertheless, our proofs are  rather short.
\end{itemize}

As an application of Hardy-Stein identities, we state characterizations of norms in harmonic Hardy spaces 
and give Littlewood--Paley type estimate for square functions and their conditional versions. 

In the last section of our paper, we give examples of forms satisfying our assumptions  and provide additional  comments on special cases.

\section{Preliminaries}
\label{sec2}

We denote by $\BB(\BR)$ (resp. $\BB^+(\BR)$) the set of all Borel (resp. nonnegative Borel) functions on  $\BR$. For a convex function $\varphi:\BR\rightarrow\BR_+:=[0,\infty)$ we denote by $\varphi'_{-}$ its left derivative, i.e. 
$\varphi'_{-}(x)=\lim_{t\to 0^-}(\varphi(x+t)-\varphi(x))/t$, $x\in\BR$.
We say that functions $u$, $v$ are comparable on a set $B$, written  $u \asymp v$ on $B$, 
if there exist constants $c, C>0$ such that
$$
c v(x) \leq u(x) \leq C v(x),\quad x\in B.
$$

\subsection{Dirichlet forms and associated Hunt processes}

In the paper, $E$ is a locally compact separable metric space and $D$ is an open subset of $E$.
Let $\partial$ be a one-point compactification of 
$E$ if $E$ is non-compact, and an isolated point if $E$ is compact. We adopt the convention 
that every function $f$ on $E$ is automatically extended to $E_\partial=E\cup\{\partial\}$ by letting $f(\partial)=0$.

Let $m$ be a  Radon measure on $E$ with full support. We denote by $(\EE,\mathscr D(\EE))$ a regular symmetric Dirichlet 
form  on $L^2(E;m)$.  We always assume that for any open set  $U\subset\subset D$ the part of the form $\EE$ on $U$ is transient.
Quasi-notions used in the paper (capacity, quasi continuity, quasi open sets etc.)  associated with $\EE$ are defined as in \cite[Section 2.1]{FOT}.  

We denote by  $\BM=(\Omega,\BF=(\FF_t)_{t\ge0},X=(X_t)_{t\in[0,\infty]}, (\BP_x)_{x\in E_\partial})$  an $m$-symmetric Hunt process on $E$ associated with $(\EE,\mathscr D(\EE))$.
Its life time is denoted by $\zeta$ and the associated translation operators by $\theta_t$, $t\ge0$. In the paper, we assume that $\BM$ satisfies  the  absolute continuity condition, i.e. 
\[
\int_0^\infty e^{-t}p_t(x,dy)\,dt\ll m(dy)\quad\mbox{for every  $x\in E$},
\]
where  $p_t$ is the transition function of $\BM$.
For a nearly Borel   $U\subset E$ we set
\[
\tau_U=\inf\{t>0:X_t\notin U\}.
\]
We define the potential  operator $R^U$ associated with the part of the form $\EE$ on $U$ by
\[
R^Uf(x)=\BE_x\int^{\tau_U}_0f(X_t)\,dt,\quad x\in E, \,f\in\BB^+(E),
\]
where $\BE_x$ is the expectation with respect to $\BP_x$. Since $\BM$ satisfies the absolute continuity condition and $\mathbb M^U$ is transient
for any open $U\subset\subset D$, from
\cite[Lemma 4.2.4]{FOT} and an approximation argument it follows that there is a nonnegative jointly measurable  function $G_U(x,y)$, $x,y\in E$ (called Green function for  $U$ relative to $\EE$) such that
\[
R^Uf(x)=\int_{E}G_U(x,y)f(y)\,m(dy),\quad x\in E, \,f\in\BB^+(E).
\]
For a nonnegative  Borel measure $\mu$ on $U$ we further set
\[
R^U\mu(x)= \int_UG_U(x,y)\,\mu(dy),\quad x\in U.
\]
We also set
\[
P_U(x,B)=P_x(X_{\tau_U}\in B),\quad B\in\BB(E).
\]
The family of measures $\{P_U(x,\cdot),x\in E\}$ is called the Poisson kernel for $U$ (relative to $\EE$).

\subsection{Harmonic functions}

As a basic definition of harmonicity we adopt the following.

\begin{definition}
A Borel measurable function $u$ on $E$ is called harmonic in $D$ (relative to $\EE$)
if for all open $U\subset\subset D$ and $x\in U$, $\mathbb E_x |u(X_{\tau_U})|<\infty$ and $\BE_xu(X_{\tau_U})=u(x)$.
\end{definition}

We denote by  $\mathscr H(D)$  the set of all harmonic functions in $D$ relative to $\EE$. Since $\EE$ is  fixed in the whole paper except for the last section including examples, we suppress the explicit dependence of this space on $\EE$ in the notation. 

\begin{remark}
\label{rem2.2}
According to the above definition, harmonic functions in $D\subset E$ must be defined on the whole $E$. 
This reflects the nature of the notion of harmonicity in the general framework considered here, which includes non-local or mixed type operators.  However, if  $\EE$ is local, i.e. $\EE^{(j)}=0$ in \eqref{eq1.1}, then 
 the behavior of a function $u:E\to \mathbb R$  outside $D$ does not affect whether  $u$ is harmonic in $D$.
This is  because $\EE^{(j)}=0$ if and only if $\mathbb M$ is continuous (see \cite[Theorem 4.5.1]{FOT}), which in turn implies that
$\mathbb P_x(X_{\tau_U}\in D\cup\{\partial\})=1$ for any open $U\subset\subset D$ and $x\in U$.
\end{remark}

\begin{proposition}
\label{prop2.2}
A  Borel measurable function $u$ on  $E$  belongs to $\mathscr H(D)$   if and only if   for every  $U\subset\subset D$
the process $t\mapsto u(X_{t\wedge\tau_U})$ is a uniformly integrable $\mathbb P_x$-martingale for any  $x\in U$.
\end{proposition}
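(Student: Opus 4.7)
The plan is to establish the two implications separately: the ``if'' direction will follow essentially from martingale convergence, while the ``only if'' direction is the substantive one and requires upgrading the single-stopping-time identity $\BE_x u(X_{\tau_U})=u(x)$ from the definition to a full conditional identity with respect to the natural filtration.

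For the ``if'' direction, suppose that for some $U\subset\subset D$ and $x\in U$ the process $M_t:=u(X_{t\wedge\tau_U})$ is a uniformly integrable $\BP_x$-martingale. Transience of the part of $\EE$ on $U$ gives $\tau_U<\infty$ $\BP_x$-a.s., and right-continuity of $X$ together with the convention $u(\partial)=0$ yields $M_t\to u(X_{\tau_U})$ $\BP_x$-a.s. The martingale convergence theorem upgrades this to convergence in $L^1$, so equating $\BE_x M_0=u(x)$ with $\BE_x M_\infty=\BE_x u(X_{\tau_U})$ gives both the finiteness of $\BE_x|u(X_{\tau_U})|$ and the harmonicity identity $\BE_x u(X_{\tau_U})=u(x)$.

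For the converse, fix $U\subset\subset D$ and $x\in U$ and set $\xi:=u(X_{\tau_U})$, which lies in $L^1(\BP_x)$ by the harmonicity hypothesis. The key step is to prove
\[
\BE_x[\xi\mid\FF_t]=u(X_{t\wedge\tau_U})\quad\text{for every }t\ge 0,
\]
from which the uniformly integrable $(\FF_t)$-martingale property of $(M_t)$ is immediate, being a conditional expectation of a fixed $L^1$ random variable. To establish the display, split along the events $\{t\ge\tau_U\}$ and $\{t<\tau_U\}$, both of which lie in $\FF_t$. On $\{t\ge\tau_U\}$ the random variable $\xi$ is $\FF_{\tau_U}$- hence $\FF_t$-measurable and coincides with $u(X_{t\wedge\tau_U})$. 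On $\{t<\tau_U\}$ one has $\tau_U=t+\tau_U\circ\theta_t$ and $X_{\tau_U}=X_{\tau_U\circ\theta_t}\circ\theta_t$, so the Markov property at $t$ yields $\BE_x[\xi\mid\FF_t]=\BE_{X_t}u(X_{\tau_U})$; since $X_t\in U$ on this event, harmonicity gives $\BE_{X_t}u(X_{\tau_U})=u(X_t)=u(X_{t\wedge\tau_U})$, as required.

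The main technical point I expect to have to check carefully is that the argument on $\{t<\tau_U\}$ applies correctly in the nonlocal/killed setting, where $X_{\tau_U}$ may lie in $E\setminus\bar U$ or equal $\partial$ due to a jump of $X$ out of $U$ or a killing inside $U$. This is handled uniformly by the identity $X_{\tau_U}=X_{\tau_U\circ\theta_t}\circ\theta_t$ on $\{t<\tau_U\}$ and by the convention $u(\partial)=0$ adopted in Section \ref{sec2}, so that no additional assumption on the values of $u$ outside $U$ enters the proof.
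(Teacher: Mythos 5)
Your proof is correct and takes essentially the same route as the paper: the substantive necessity direction is handled exactly as in the paper's proof, by splitting on $\{t<\tau_U\}$ and $\{t\ge\tau_U\}$ and applying the Markov property via $\tau_U=t+\tau_U\circ\theta_t$ together with harmonicity at $X_t\in U$ (you condition on $\FF_t$ where the paper conditions on $\FF_{t\wedge\tau_U}$, an immaterial difference). The sufficiency direction, which the paper dismisses as obvious, you justify by martingale convergence; note only that invoking Doob's optional sampling theorem for uniformly integrable martingales makes the appeal to ``transience implies $\tau_U<\infty$ a.s.'' unnecessary.
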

\begin{proof}
The sufficiency is obvious. To prove the necessity, fix an open $U\subset\subset D$.
Since $u$ is assumed to be in $\mathscr H(D)$, $u(x)=\mathbb E_x u(X_{\tau_U}),\, x\in U$.
Therefore, by the Markov property, for every  $x\in U$,
\begin{align*}
\mathbf1_{\{t<\tau_U\}} u(X_t)&=\mathbf1_{\{t<\tau_U\}}  \mathbb E_{X_t} u(X_{\tau_U})=
\mathbf1_{\{t<\tau_U\}}  \mathbb E_{x} (u(X_{\tau_U}\circ \theta_t)|\FF_t)\\&=
\mathbb E_{x} (\mathbf1_{\{t<\tau_U\}}  u(X_{\tau_U}\circ \theta_t)|\FF_t)
=
\mathbb E_{x} (\mathbf1_{\{t<\tau_U\}}  u(X_{\tau_U})|\FF_t)\\&=
\mathbf1_{\{t<\tau_U\}}  \mathbb E_{x} (u(X_{\tau_U})|\FF_t)
=\mathbf1_{\{t<\tau_U\}}  \mathbb E_{x} (u(X_{\tau_U})|\FF_{t\wedge\tau_U}),
\end{align*}
$\mathbb P_x$-a.s. Hence, for every $x\in U$,
\begin{align*}
u(X_{t\wedge\tau_D})&=\mathbf1_{\{t<\tau_U\}}  u(X_t)+
\mathbf1_{\{t\ge \tau_U\}} u(X_{\tau_U\wedge t})\\&=
\mathbf1_{\{t<\tau_U\}}  \mathbb E_{x} (u(X_{\tau_U})|\FF_{\tau_U\wedge t})
+\mathbf1_{\{t\ge \tau_U\}} \mathbb E_{x} (u(X_{\tau_U\wedge t})|\FF_{t\wedge\tau_U})\\
&=\mathbb E_{x} (u(X_{\tau_U})|\FF_{t\wedge\tau_U}),
\end{align*}
$\mathbb P_x$-a.s., which completes the proof of the necessity.
\end{proof}

Define
\[
\DDr(\EE^D)=\{u\in\DDr(\EE): u=0\mbox{ q.e. on }E\setminus D\}.
\]
We shall say that $u\in{\mathscr D}_{loc}(\EE^D)$ if for every open $U\subset\subset D$ there is $v\in\mathscr D(\EE^D)$
such that $u=v$ $m$-a.e. on $U$. Every function  $u\in\mathscr D_{loc}(\EE^D)$ admits an $m$-version that is
quasi continuous on $D$. In the paper, we assume that  every function in $\mathscr D_{loc}(\EE^D)$ when restricted to $D$ is represented by its quasi continuous version.
We say that $u\in\dot{\mathscr D}_{loc}(\EE^D)$ if there exists a sequence $(V_n)_{n\ge1}$
of finely open sets such that its union is q.e. equal to $D$
and for every $n\ge 1$ there exists  $v_n\in\mathscr D(\EE^D)$
such that $u=v_n$ q.e.  on $V_n$.

\begin{theorem}
\label{th2.3}
Let $u:E\rightarrow\BR$ be a Borel measurable  function.
\begin{enumerate}[\rm(i)]
\item If $u\in\mathscr H(D)$, then $u\in\dot{\mathscr D}_{loc}(\EE^D)$.
\item If $u\in\mathscr H(D)$ is   nonnegative  and locally bounded in $D$, then $u\in\mathscr D_{loc}(\EE^D)$.
\item If $u\in\mathscr H(D)$ is nonnegative in $D$, then  $u\wedge k\in \mathscr D_{loc}(\EE^D)$ for every $k\ge 0$.
\end{enumerate}
\end{theorem}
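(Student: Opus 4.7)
The plan is to prove the three parts in the order (ii), (iii), (i): part (ii) contains the main energetic construction, (iii) follows by applying it to truncations, and (i) comes from fine localization.

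\textbf{Preliminary observation.} For every $u\in\mathscr H(D)$, Proposition \ref{prop2.2} shows that $u(X_{t\wedge\tau_U})$ is a uniformly integrable $\mathbb P_x$-martingale for every $U\subset\subset D$ and $x\in U$. Together with right-continuity of $X$ under $\mathbb P_x$ and the absolute continuity assumption, this yields that the map $t\mapsto u(X_t)$ is right-continuous up to $\tau_U$, which in turn forces $u$ to be finely continuous q.e.\ on $D$. I would record this fact first, as it is used in all three parts.

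\textbf{Step 1 (part (ii)).} Fix $U\subset\subset V\subset\subset D$ and choose $\varphi\in\mathscr D(\EE)\cap C_c(D)$ with $\varphi\equiv 1$ on a neighbourhood of $\bar V$. Since $u$ is nonnegative and locally bounded, $u$ is bounded by some $M$ on $\mathrm{supp}(\varphi)$, so the product $u\varphi$ is bounded, vanishes outside $D$, and agrees with $u$ on $U$. The candidate is $v:=u\varphi$. To establish $v\in\mathscr D(\EE^D)$, I would use the probabilistic side: the bounded UI martingale $M_t:=u(X_{t\wedge\tau_W})$ (with $W\subset\subset D$ containing $\mathrm{supp}(\varphi)$) has a finite predictable quadratic variation $\langle M\rangle_{\tau_W}$, whose Revuz measure controls the energy measure of $v$ via the Beurling–Deny decomposition \eqref{eq1.1}. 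The transience of $\EE$ on $W$ guaranteed by our hypotheses, combined with the existence of the Green function $G_W$, produces the required finite-energy bound $\EE(v,v)<\infty$ and membership $v\in\mathscr D(\EE^D)$.

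\textbf{Step 2 (part (iii)).} The truncation $u\wedge k$ is nonnegative and bounded by $k$, hence automatically locally bounded; however it is no longer harmonic, only a supermartingale along $X_{\cdot\wedge\tau_V}$ by Jensen. I would repeat the construction of Step 1 with $u\wedge k$ in place of $u$, noting that the key ingredient needed is boundedness plus the supermartingale property (both of which yield the same PCAF-energy bound). This gives $v\in\mathscr D(\EE^D)$ with $v=u\wedge k$ m-a.e.\ on any prescribed $U\subset\subset D$.

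\textbf{Step 3 (part (i)).} For general harmonic $u$, fix an exhaustion $(U_n)$ of $D$ by relatively compact open sets and set
\[
V_n := U_n\cap\{x\in D:|u(x)|<n\}.
\]
By the preliminary observation each $V_n$ is finely open, and $\bigcup_n V_n$ is q.e.\ equal to $D$ because $u$ is finite and finely continuous q.e. On each $V_n$ the function $u$ is bounded by $n$, and the construction of Step 1, now applied to the bounded (not necessarily nonnegative) harmonic function $u$ on the finely open set $V_n$, produces $v_n\in\mathscr D(\EE^D)$ with $v_n=u$ q.e.\ on $V_n$. This gives exactly the defining property of $\dot{\mathscr D}_{loc}(\EE^D)$.

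\textbf{Main obstacle.} The delicate point is Step 1: converting the martingale property of $u(X_{\cdot\wedge\tau_W})$ into a genuine $\EE$-energy estimate for $u\varphi$. One must identify the predictable quadratic variation of the bounded martingale with the energy measure appearing in \eqref{eq1.1}, control its mass via $G_W$ and the transience assumption, and verify that the cutoff procedure $u\mapsto u\varphi$ does not destroy finite energy — all of which relies on the absolute continuity condition and the Revuz correspondence between PCAFs and smooth measures for the part process $\mathbb M^W$.
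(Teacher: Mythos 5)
Your overall architecture (localize on finely open sets where the function is bounded, then prove the bounded case) matches the paper's, but the decisive step — Step 1 — is not actually proved, and the mechanism you propose for it does not work. First, the martingale $M_t=u(X_{t\wedge\tau_W})$ is uniformly integrable but \emph{not} bounded: local boundedness of $u$ in $D$ says nothing about $u(X_{\tau_W})$, since for a nonlocal form the process exits $W$ by a jump that may land anywhere in $E$; so even $\BE_x\langle M\rangle_{\tau_W}<\infty$ (which needs $L^2$-boundedness, not just UI) is unjustified. Second, and more seriously, identifying the quadratic variation of $M$ with ``the energy measure of $v$'' is circular: that identification is the content of the Fukushima decomposition, which applies to functions already known to lie in $\dot{\mathscr D}_{loc}(\EE^D)$ — i.e., it presupposes the conclusion. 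Third, even granting a bound on some energy measure, finiteness of a formal energy expression does not place $u\varphi$ in the domain of a general regular Dirichlet form, and nothing in your argument controls the near-diagonal jump energy $\int\!\!\int(u\varphi(x)-u\varphi(y))^2\,J(dx,dy)$ of the cutoff product; the choice $v:=u\varphi$ is simply not a candidate one can certify. Since Steps 2 and 3 both reduce to ``the construction of Step 1,'' the whole proof collapses at this point.

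The paper's proof avoids all of this by exploiting \emph{excessiveness} rather than the martingale/energy route: a nonnegative harmonic function is $(P^U_t)$-excessive for the part process, and the statement that a bounded excessive function coincides $m$-a.e.\ (resp.\ q.e.) on relatively compact (resp.\ finely open, finite-capacity) sets with an element of $\mathscr D(\EE^D)$ is exactly the content of the cited results \cite[Proposition III.1.5]{MR} and \cite[Theorem 5.3]{KR}; these are proved by resolvent approximation of excessive functions, not by cutoff multiplication. For part (i) the paper writes $u=h^1_V-h^2_V$ on $V$ with $h^i_V(x)=\BE_xu^{\pm}(X_{\tau_V})$, each excessive hence finely continuous, and localizes on $\{h^i_V<k\}$ — your sets $U_n\cap\{|u|<n\}$ are close in spirit, but applying the bounded case to a signed $u$ there still requires the decomposition into excessive pieces, since excessiveness (not mere boundedness plus harmonicity) is what drives the cited lemmas. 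To repair your proof you would need to either import such a result on excessive functions or reprove it; the quadratic-variation argument cannot substitute for it.
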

\begin{proof}
To show (i), consider an open set  $V\subset\subset D$ and put  
$h^1_V(x)=\BE_xu^+(X_{\tau_V})$, $h^2_V(x)=\BE_xu^-(X_{\tau_V})$. 
Both $h_1$ and $h_2$ are $(P^V_t)$-excessive, hence finely continuous on $V$.
Applying \cite[Theorem 5.3]{KR}  to $h^i_V$, and $V^i_k=\{h^i_V<k\}$, $i=1,2$, gives (i) by arbitrariness of $V\subset\subset D$
and $k$. Part (ii) follows directly from  \cite[Theorem 5.3]{KR}.
We now turn to the proof of (iii). 
By the definition of a harmonic function in $D$ one easily concludes that $w_k:=u\wedge k$ is $(P^D_t)$-excessive.
Let $U\subset\subset D$ be an open set. By \cite[Proposition III.1.5]{MR}, there exists a function  $h^k_U\in \mathscr D(\EE^D)$ such that $h^k_U=w_k$ $m$-a.e. on $U$. This establishes (iii). 
\end{proof}

\begin{definition}[Energy measure]
For  bounded $v\in \mathscr D(\EE)$ we denote by $\mu^c_{\langle v\rangle}$  the unique smooth measure  such that 
\[
\int_Ef\,d\mu^c_{\langle v\rangle}
=2\mathcal{E}^{(c)}(v,f)-\mathcal{E}^{(c)}(v^2,f), \quad f \in \mathscr{D}(\EE) \cap C_c(E),
\]
and then we extend the definition  to $v\in \mathscr D(\EE)$ as the  limit of $\mu^c_{T_k(v)}$, where  $T_k(s)=\min(\max(-k,s),k)$ (see the comments in   \cite[ p. 126]{FOT}). 
\end{definition}

\begin{remark}
By \cite[Exercise 4.3.12]{CF}, $\mu^c_{\langle v\rangle}$ is the unique smooth measure 
that is in the Revuz correspondence with the positive continuous additive functional  $[M^{[v]}]^c$ of $\BM$, where $M^{[v]}$ is the martingale additive functional  of $\BM$ of finite energy  from the Fukushima decomposition of the additive functional $v(X)-v(X_0)$. As a result, $\mu^c_{T_k(v)}$
is  increasing in $k$. Furthermore, for  $v\in \mathscr D(\EE)$ we may define $\mu^c_{\langle v\rangle}$ equivalently as  the unique smooth measure  such that for every $k>0$,
\[
\int_Ef\mathbf1_{\{-k<v\le k\}}\,d\mu^c_{\langle v\rangle}=2\mathcal{E}^{(c)}(T_k(v), T_k(v) f)-\mathcal{E}^{(c)}((T_k(v))^2, f),
\quad f \in \mathscr{D}(\EE) \cap C_c(E).
\]
\end{remark}

\begin{remark}
\label{rem2.5}
 (i) By \cite[Theorem  4.3.10]{CF},  for any $v\in\dot{\DDr}_{loc}(\EE^D)$, there exists a unique  smooth measure $\mu^c_{\langle v\rangle}$ on $D$ such that 
for any nearly Borel quasi open set  $V\subset\subset  D$ and 
any bounded function $\eta_V\in \mathscr D(\EE)$  satisfying $\eta_V=v$ q.e. on $V$
\begin{equation}
\label{eq2.8}
(\mu^c_{\langle v\rangle})_{\lfloor V}=(\mu^c_{\langle \eta_V\rangle})_{\lfloor V}.
\end{equation}
(ii) If we additionally know  that there exists a nearly Borel quasi open set $U\subset\subset D$
such that $Y_t= v(X_{t\wedge\tau_U})$ is a semimartingale under the measure $\BP_x$ for $m$-a.e. $x\in U$, then $[Y]^c_t=[M^{[\eta_U]}]^c_{t\wedge\tau_U}$, $t\ge 0$,
for any  bounded  $\eta_U\in \mathscr D(\EE)$  such that $\eta_U=v$ q.e. on $U$.
Indeed, by the Doob--Meyer decomposition,
\[
Y_t-Y_0=M^x_t+A^x_t,\quad t\ge 0,\quad \BP_x\text{-a.s.}
\]
for a martingale $M^x$ and a continuous finite variation process $A^x$ ($A^x_0=M^x_0=0$).
It is well known that for every $x\in U$,
\[
\sum_{k=1}^n(Y_{tk/n}-Y_{t(k-1)/n})^2\to [Y]_t
\]
uniformly on compacts in probability $\BP_x$. 
On the other hand,  by \cite[(5.2.14),  Theorem A.3.10]{FOT},
\[
\sum_{k=1}^n(\eta_U(X_{tk/n})-\eta_U(X_{t(k-1)/n}))^2\to [M^{[\eta_U]}]_t 
\]
uniformly on compacts in probability $\BP_m$, where $\BP_m(\cdot)=\int_E\BP_x(\cdot)\,m(dx)$. Since $Y_t=\eta_U(X_t),\, t\le\tau_U$, we conclude that 
\begin{equation}
\label{eq2.6}
[Y]^c_t=[M^{[\eta_U]}]_t^c,\quad t\le \tau_U,\quad \BP_x\mbox{-a.s.}
\end{equation}
 for $m$-a.e. $x\in U$.
\end{remark}

\begin{remark}
Let $u\in\mathscr H(D)$,  $\mu^c_{\langle u\rangle}$ be the measure of Remark \ref{rem2.5} and  $Y_t=u(X_{t\wedge \tau_U})$, $t\ge 0$.	
From the definition of $\mu^c_{\langle u\rangle}$ and Remark \ref{rem2.5} it follows  that for any open $U\subset\subset D$ and $x\in U$, 
\begin{equation}
\label{eq2.2}
\int_Uf(y)G_U(x,y)\,\mu^c_{\langle u\rangle}(dy)=\mathbb E_x \int_0^{\tau_U}f(X_s)\, d[Y]^c_s,\quad f\in\mathcal B^+(U).
\end{equation}
Indeed, since  both sides of (\ref{eq2.2}) are $(P^U_t)$-excessive
and the part $\EE^U$ of the form $\EE$ on $U$ satisfies  the absolute continuity condition, if follows from \cite[Chapter VI, Proposition (1.3)]{BG} that it suffices to show (\ref{eq2.2}) for  $m$-a.e. $x\in U$.
To see this, we first note that 
 by Theorem \ref{th2.3} $u\in\dot{\mathscr D}_{loc}(\EE^D)$. Hence, 
by the very definition of this space, there exists a sequence $(V_n)_{n\ge1}$
of finely open sets such that its union is q.e. equal to $D$
and for every $n\ge 1$ there exists  $v_n\in\mathscr D(\EE^D)$
such that $u=v_n$ q.e.  on $V_n$. Using \eqref{eq2.8} and  \eqref{eq2.6} for $f\in\BB^+(U)$ we get
\begin{align*}
\int_{V_n}f(y)G_U(x,y)\,\mu^c_{\langle u\rangle}(dy)&=
\int_{U}f(y)G_U(x,y)\,(\mu^c_{\langle u\rangle})_{\lfloor V_n}(dy)\\
&=\int_{U}f(y)G_U(x,y)\,(\mu^c_{\langle v_n\rangle})_{\lfloor V_n}(dy)\\
&=\int_{U}f(y)\mathbf1_{V_n}(y)G_U(x,y)\,\mu^c_{\langle v_n\rangle}(dy)\\
&=\BE_x \int_0^{\tau_U}\mathbf1_{V_n}(X_s)f(X_s)\, d[M^{[v_n]}]^c_s\\
&=\BE_x \int_0^{\tau_U}\mathbf1_{V_n}(X_s)f(X_s)\, d[Y]^c_s.
\end{align*}
Letting $n\to \infty$ yields \eqref{eq2.2}.
\end{remark}

We close this subsection with an analytic characterization of harmonic functions obtained in \cite{C}. For $u\in\BB(E)$ let us consider the following conditions: for any open relatively compact open sets $V,U$ such that  $U\subset \subset V \subset\subset D$,
\begin{equation}
\label{eq2.3}
\int_{U \times(E \setminus V)}|u(y)| \,J(d x, d y)<\infty
\end{equation}
and
\begin{equation}
\label{eq2.4}
\mathbf{1}_U(x) \mathbb{E}_x\left[\left(\left(1-\phi_V\right)|u|\right)\left(X_{\tau_U}\right)\right] \in \mathscr D_e(\EE^U)
\end{equation}
for some $\phi_V \in C_c(D) \cap\DDr(\EE)$ such that  $0 \leq \phi_V \leq 1$ and $\phi_V=1$ on $V$. 
Note that both conditions are automatically satisfied when $X$ is a diffusion or $u$ is bounded.

Let $D$ be an open  relatively compact subset of  $E$.
It is known (see \cite{C} or \cite[Lemma 6.7.8]{CF}) that  $\EE$ can be extended to
all pairs $(u,\eta)$ such that  $u\in \mathscr D_{loc}(\EE^D)$ is locally bounded on $D$ and satisfies  \eqref{eq2.3} and $\eta\in C_c(D)\cap\mathscr D(\EE)$  by letting 
\[
\EE(u,\eta):= \EE^{(c)}(u_\eta,\eta)+\int_{E\times E} (u(x)-u(y))(\eta(x)-\eta(y))\,J(dx,dy)+\int_Eu(x)\eta(x)\kappa(dx),
\]
where $u_\eta$ is an arbitrary function in $\mathscr D(\EE^D)$ such 
that $u_{\eta}=u$ on an open set $V$ such that 
$\mbox{supp}[\eta]\subset V\subset\subset D$
(the right-hand side does not depend on the choice of $u_\eta$ due to the strong local property of $\EE^{(c)}$).

\begin{theorem}
Suppose that $u \in \mathscr D_{loc}(\EE^D)$ is locally bounded on $D$,
satisfies  \eqref{eq2.3}, \eqref{eq2.4}  and 
\[
\EE(u, v)=0 \quad \text { for every } v \in C_c(D) \cap \mathscr D(\EE).
\]
Then $u\in\mathscr H(D)$.
\end{theorem}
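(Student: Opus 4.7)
Fix an open set $U\subset\subset D$ and $x\in U$. The strategy is a Fukushima decomposition applied to a truncated approximant $u_1$ of $u$, coupled with a L\'evy-system analysis of the terminal jump of $X$ out of $U$. Choose $V$ with $U\subset\subset V\subset\subset D$ and a cutoff $\phi_V\in C_c(D)\cap\DDr(\EE)$ with $\phi_V\equiv 1$ on $V$, as in (2.4). Split $u=u_1+u_2$ with $u_1:=u\phi_V$ and $u_2:=u(1-\phi_V)$. Since $u$ is locally bounded on $D$ and $\mathrm{supp}(\phi_V)\subset\subset D$, the function $u_1$ is bounded and lies in $\DDr(\EE^D)$, while $u_2\equiv 0$ on $V\supset U$. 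The integrability $\BE_x|u(X_{\tau_U})|<\infty$ reduces, via the L\'evy system for $\BM$, to the finiteness of the boundary integral $\int_U G_U(x,z)\int_{V^c}|u(y)|\,J(dz,dy)$, which follows from (2.3) combined with the local boundedness of $G_U(x,\cdot)$.

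Next, the Fukushima decomposition applied to $u_1\in\DDr(\EE)$ yields
\[
u_1(X_t)-u_1(X_0)=M^{[u_1]}_t+N^{[u_1]}_t,
\]
where $M^{[u_1]}$ is a martingale additive functional of $\BM$ of finite energy and $N^{[u_1]}$ is a continuous additive functional of zero energy. Since $\phi_V\equiv 1$ on $U$, $u_1(X_s)=u(X_s)$ for $s<\tau_U$, so establishing the martingale property of $u(X_{\cdot\wedge\tau_U})$ reduces to showing that $N^{[u_1]}$, stopped at $\tau_U$, is precisely compensated by the terminal jump $u_2(X_{\tau_U})$. To identify the Revuz measure of this drift I compare $\EE(u_1,v)$ with $\EE(u,v)$ for test functions $v\in C_c(U)\cap\DDr(\EE)$: on $\mathrm{supp}(v)\subset U$ one has $u_1=u$ and $u_2=0$, so the local and killing pieces of $\EE(u,v)$ (in the extended sense recalled just before the theorem, taking $u_\eta=u_1$) coincide with those of $\EE(u_1,v)$, while by symmetry of $J$ the jump pieces differ by exactly $-2\int_{U\times V^c}u(y)v(x)\,J(dx,dy)$. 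Hence the hypothesis $\EE(u,v)=0$ becomes
\[
\EE(u_1,v)=\int_U v(x)\,\sigma(dx),\qquad \sigma(dx):=2\int_{V^c}u(y)\,J(dx,dy),
\]
where by (2.3) the signed measure $\sigma$ is finite on $U$. Revuz duality then identifies (up to sign) the Revuz measure of $N^{[u_1]}_{\cdot\wedge\tau_U}$ with $\sigma$, and the L\'evy-system formula shows that the very same $\sigma$ is the compensator measure of the cumulative-jump process $s\mapsto u_2(X_s)\mathbf{1}_{\{X_{s-}\in U\}}$, which has a single non-zero term occurring at $s=\tau_U$ whenever $X_{\tau_U}\in V^c$.

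Combining these two identifications, $u(X_{t\wedge\tau_U})-u(X_0)=M^{[u_1]}_{t\wedge\tau_U}+R_t$, where $R_t$ is a compensated pure-jump martingale, uniformly integrable by the bound obtained in the first paragraph; Proposition \ref{prop2.2} then yields $u\in\mathscr H(D)$ for $m$-a.e.\ starting point, and the absolute-continuity hypothesis on $\EE$ upgrades the $m$-a.e.\ identity to every $x\in U$ since, after decomposing via $u^\pm$, both sides can be viewed through hitting-type potentials that are finely continuous on $U$ under the standing convention on quasi-continuous representatives. The main obstacle is the Revuz-duality step: because $u\notin\DDr(\EE)$ in general, the Fukushima decomposition is only available for the truncation $u_1$, and the entire argument hinges on showing that the mismatch between $\EE(u,v)$ and $\EE(u_1,v)$ — a purely non-local boundary integral over $U\times V^c$ — is both finite and matches exactly the L\'evy-system compensator of the outward jump. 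Conditions (2.3) and (2.4) are precisely what ensures that this bookkeeping is rigorous in the present general framework of mixed local/non-local forms.
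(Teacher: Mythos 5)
The paper offers no proof of its own here: it simply points to Chen's \emph{On notions of harmonicity} and to \cite[Theorem 6.7.9]{CF}. Your outline reconstructs the architecture of that cited proof faithfully — split $u=u\phi_V+u(1-\phi_V)$, apply the Fukushima decomposition to the bounded element $u_1=u\phi_V\in\DDr(\EE)$, identify the zero-energy part $N^{[u_1]}$ on $[0,\tau_U]$ with minus the PCAF of the boundary measure $\sigma$ coming from the non-local mismatch between $\EE(u,\cdot)$ and $\EE(u_1,\cdot)$, and cancel it against the L\'evy-system compensator of the terminal jump $u_2(X_{\tau_U})$. So the route is the right one.

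There is, however, a concrete gap in the integrability step, and it propagates into the Revuz-duality step. You assert that $\BE_x|u(X_{\tau_U})|<\infty$ follows from \eqref{eq2.3} ``combined with the local boundedness of $G_U(x,\cdot)$''. The Green function $G_U(x,\cdot)$ is not locally bounded in general (it blows up at $x$ whenever the form has a nondegenerate local part, e.g.\ like $|x-y|^{2-d}$ for the Laplacian), and, more to the point, \eqref{eq2.3} only says that $\sigma(dz)=2\int_{V^c}(1-\phi_V(y))u(y)\,J(dz,dy)$ has finite total variation on $U$ (note also the factor $1-\phi_V(y)$, which you dropped: $u_2=u(1-\phi_V)$ need not equal $u$ on all of $V^c$). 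Finiteness of $|\sigma|(U)$ does not imply finiteness of $R^U|\sigma|(x)=\int_UG_U(x,z)\,|\sigma|(dz)$, nor that $\sigma$ charges no set of zero capacity. Supplying exactly this is the job of hypothesis \eqref{eq2.4}: by the L\'evy system, $\BE_x[((1-\phi_V)|u|)(X_{\tau_U})]=R^U(\Phi\cdot\mu_H)(x)$ with $\Phi(z)=\int_{E_\partial}(1-\phi_V)(y)|u|(y)N(z,dy)$, and \eqref{eq2.4} places this potential in $\mathscr D_e(\EE^U)$, which simultaneously yields q.e.\ finiteness of the exit expectation, smoothness and finite energy integral of $\sigma$, and the legitimacy of the duality $\EE^U(R^U\sigma,v)=\int v\,d\sigma$ that you need in order to invoke the (localized) identification $N^{[u_1]}_{t\wedge\tau_U}=-A^\sigma_{t\wedge\tau_U}$ from \cite[Theorem 5.5.5]{FOT}. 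In your write-up \eqref{eq2.4} is never actually deployed except in a closing sentence, so as written both the integrability of $u(X_{\tau_U})$ and the Revuz identification are unsupported; rerouting those two steps through \eqref{eq2.4} rather than \eqref{eq2.3} closes the gap.
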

\begin{proof}
See \cite{C} or \cite[Theorem 6.7.9]{CF}.
\end{proof}

For examples of forms and functions satisfying (\ref{eq2.3}), (\ref{eq2.4}) we refer the reader to \cite{C} and \cite[Section 6.7]{CF}.

\subsection{It\^o--Meyer formula}

In this subsection, $(\Omega,\FF,\BF,\mathbb P)$ is an arbitrary  filtered, complete probability space satisfying the usual hypotheses (see, e.g., \cite[Chapter I]{Pr}. We denote by $\BE$ the expectation with respect to $\mathbb P$. For a c\` adl\`ag process $Y$ and $t>0$ we set $\Delta Y_t=Y_t-Y_{t-}$\,.

Let $Y$ be a c\`adl\`ag  $\BF$-semimartingale. For $a\in\BR$ we denote by $[0,\infty)\ni t\mapsto L^a(Y)_t$ (or simply by $L^a_t$) the local time of $Y$ at $a\in\BR$. Note that we can always choose a jointly measurable (in $a$ and $t$) version of the local time, $L^a_0=0$ and  the process $L^a$ is  continuous and  increasing in $t$ for each $a$  (see \cite[Section  IV.7]{Pr} for details).
For the following result see  \cite[Theorem IV.70]{Pr}.

\begin{proposition}[It\^o--Meyer formula]
Let $Y$ be a c\`adl\`ag $\BF$-semimartingale and $\varphi:\mathbb R\to\mathbb R_+$
be a convex function. Then 
\begin{align}
\label{eq2.1}
\varphi(Y_t)&=\varphi(Y_0)+\int^t_0\varphi'_{-}(Y_{s-})\,dY_s+\frac12\int_{\mathbb R}L^a_t\,\mu_{\varphi}(da)+J^\varphi_t,
\quad t\ge0,
\end{align}
with
\[
J^\varphi_t=\sum_{0<s\le t}\{\varphi(Y_s)-\varphi(Y_{s-})-\varphi'_{-}(Y_{s-})\cdot\Delta Y_s\}.
\]
\end{proposition}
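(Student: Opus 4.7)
The plan is to obtain the Itô--Meyer formula from the Tanaka--Meyer formula for the building-block convex functions $x\mapsto (x-a)^+$, combined with the integral representation of a general convex function against its second distributional derivative $\mu_\varphi$. A standard localization step allows me to assume $Y$ is a bounded semimartingale: replace $Y$ by $Y^{T_n}$, where $T_n=\inf\{t:|Y_t|>n\}$, and let $n\to\infty$ at the end. I may also subtract an affine function from $\varphi$, which costs only an application of ordinary Itô calculus, so I can assume $\varphi(0)=\varphi'_-(0)=0$. Under these normalizations, the representation
\[
\varphi(x)=\int_{\mathbb R}(x-a)^+\mathbf{1}_{\{a\ge 0\}}\,\mu_\varphi(da)+\int_{\mathbb R}(a-x)^+\mathbf{1}_{\{a<0\}}\,\mu_\varphi(da)
\]
holds pointwise, where the measure $\mu_\varphi$ is finite on compacts.

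The first core step is the Tanaka--Meyer formula for the càdlàg semimartingale $Y$ and a single level $a$:
\begin{align*}
(Y_t-a)^+ &= (Y_0-a)^+ + \int_0^t \mathbf{1}_{\{Y_{s-}>a\}}\,dY_s + \tfrac{1}{2}L_t^a \\
&\quad + \sum_{0<s\le t}\bigl\{(Y_s-a)^+-(Y_{s-}-a)^+-\mathbf{1}_{\{Y_{s-}>a\}}\,\Delta Y_s\bigr\}.
\end{align*}
This identity is taken as the definition (and characterization) of the local time $L^a_t$; its existence as a jointly measurable, nondecreasing, continuous-in-$t$ family follows from the Doob--Meyer decomposition of the submartingale $(Y_t-a)^+$ and its increasing part having support carried by $\{Y_{s-}=a\}$. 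An analogous identity holds for $(a-Y_t)^+$ with $\mathbf{1}_{\{Y_{s-}\le a\}}$ and the same $L^a_t$, reflecting the symmetric definition of local time at a point.

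The second core step is to integrate the Tanaka--Meyer identity against $\mu_\varphi$ over $a\in\mathbb R$ and apply a stochastic Fubini argument. On the deterministic side one recovers $\varphi(Y_t)-\varphi(Y_0)$ and, via the identity $\int_{\mathbb R}\mathbf{1}_{\{Y_{s-}>a\}}\mathbf{1}_{\{a\ge 0\}}\,\mu_\varphi(da)-\int_{\mathbb R}\mathbf{1}_{\{Y_{s-}\le a\}}\mathbf{1}_{\{a<0\}}\,\mu_\varphi(da)=\varphi'_-(Y_{s-})$, the stochastic-integral term assembles into $\int_0^t\varphi'_-(Y_{s-})\,dY_s$; the local-time term is exactly $\tfrac12\int_{\mathbb R}L^a_t\,\mu_\varphi(da)$; and the pure-jump sum, after integrating in $a$ inside each summand, gives $J_t^\varphi$ by reversing the representation formula applied at $Y_s$ and $Y_{s-}$. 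The elementary bound $|(Y_s-a)^+-(Y_{s-}-a)^+-\mathbf{1}_{\{Y_{s-}>a\}}\Delta Y_s|\le |\Delta Y_s|\wedge$ (something controlled by the jump size) makes the jump sum absolutely convergent and legitimizes Fubini.

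The delicate step---and the main obstacle---is the stochastic Fubini exchange for the term $\int_{\mathbb R}\int_0^t\mathbf{1}_{\{Y_{s-}>a\}}\,dY_s\,\mu_\varphi(da)$ and, in parallel, the measurability and integrability of $(a,\omega,s)\mapsto L^a_t(\omega)$ uniformly enough to justify interchange of the $\mu_\varphi$-integral with the increasing integral. On the bounded-$Y$ reduction the integrand $(a,s)\mapsto \mathbf{1}_{\{Y_{s-}>a\}}\mathbf{1}_{\{|a|\le R\}}$ is predictable and uniformly bounded, and $\mu_\varphi$ is finite on $[-R,R]$, so the classical stochastic Fubini theorem (e.g.\ in Protter's text) applies. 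Once this interchange is granted, collecting the three pieces recovers \eqref{eq2.1} exactly; removing the normalization and the localization stopping times is routine by continuity of the formula in $n$ and linearity in $\varphi$.
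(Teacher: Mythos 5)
The paper offers no proof of this proposition at all---it simply points to \cite[Theorem IV.70]{Pr}---and your argument is essentially a correct reconstruction of the standard proof of that cited result: localization, the representation of a normalized convex $\varphi$ against $\mu_\varphi$ via the kernels $(x-a)^+$ and $(a-x)^+$, the Tanaka--Meyer formula at each level $a$, and a stochastic Fubini interchange. Two small points worth tightening: $Y^{T_n}$ with $T_n=\inf\{t:|Y_t|>n\}$ need not be bounded by $n$ because of the jump at $T_n$ (one fixes $t,\omega$ and restricts $\mu_\varphi$ to a compact containing the range of the path on $[0,t]$, or stops just before $T_n$), and the interchange in the jump sum is really Tonelli rather than Fubini, since each summand $(Y_s-a)^+-(Y_{s-}-a)^+-\mathbf{1}_{\{Y_{s-}>a\}}\Delta Y_s$ is nonnegative by convexity.
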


\begin{remark}
By the occupation time formula (see \cite[Corollary IV.1]{Pr}), for every $f\in\mathcal B^+(\mathbb R)$,
\[
\int_{\mathbb R}L^a_t f(a)\,da=\int_0^t f(Y_s)\,d[Y]^c_s,\quad t\ge 0.
\]
Therefore if  (\ref{eq1.2}) is satisfied, then  \eqref{eq2.1} takes the form 
\[
\varphi(Y_t)=\varphi(Y_0)+\int^t_0\varphi'_{-}(Y_{s-})\,dY_s
+\frac12\int_0^tg(X_{s})\,d[Y]^c_s+J^\varphi_t,
\quad t\ge0.
\]
In particular, if  $\varphi(x)=|x|^p$, then
\begin{align}
\label{eq2.5}
|Y_t|^p&=|Y_0|^p+p\int^t_0\mbox{\rm sgn}(Y_{s-})|Y_{s-}|^{p-1}\,dY_s\nonumber\\
&\quad +\frac{p(p-1)}{2}\int^t_0\fch_{\{Y_s\neq0\}}|Y_s|^{p-2}\,d[Y]_s^c+J_t,
\quad t\ge0,
\end{align}
where $[Y]^c$ denotes the continuous part of the quadratic variation $[Y]$
of $Y$ and
\[
J_t=\sum_{0<s\le t}\{|Y_s|^p-|Y_{s-}|^p-p|Y_{s-}|^{p-1}
\mbox{\rm sgn}(Y_{s-})\cdot\Delta Y_s\}.
\]
\end{remark}

\begin{proposition}
\label{prop2.6}
Let $\varphi:\mathbb R\to\mathbb R_+$
be a convex function and $Y$ be a c\`adl\`ag $\BF$-adapted process.
Suppose that $\tau$ is an  $\BF$-stopping time and $(Y_{t\wedge\tau})_{t\ge 0}$ 
is a uniformly integrable  martingale.
Then
\begin{equation}
\label{eq2.7}
\mathbb E\varphi(Y_\tau)=\mathbb E\varphi(Y_0)+\frac12\mathbb E\int_{\mathbb R}L^a_\tau\,\mu_{\varphi}(da)+\mathbb EJ^\varphi_\tau.
\end{equation}
In particular, the  expression on the  left-hand side is finite if and only if that on the right-hand side is.
\end{proposition}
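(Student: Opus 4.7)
The plan is to apply the It\^o--Meyer formula (2.1) to the stopped process $Z_t:=Y_{t\wedge\tau}$, which is a c\`adl\`ag uniformly integrable (UI) martingale by hypothesis, and then to pass to the limit after killing the stochastic integral by localization. Since $Z$ is a semimartingale, It\^o--Meyer gives
\[
\varphi(Z_t)=\varphi(Z_0)+M_t+A_t,\qquad t\ge0,
\]
where $M_t=\int_0^t\varphi'_{-}(Z_{s-})\,dZ_s$ is a local martingale (the integrand is left-continuous, hence locally bounded predictable, and $Z$ is a martingale) and
\[
A_t=\tfrac12\int_{\BR}L^a_t(Z)\,\mu_{\varphi}(da)+J^\varphi_t(Z)
\]
is a non-decreasing process: $L^a_t(Z)$ is increasing in $t$ for each $a$, and each summand of $J^\varphi_t(Z)$ is non-negative by convexity of $\varphi$. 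Since $Z_t$ is constant for $t\ge\tau$, one has $L^a_t(Z)=L^a_{t\wedge\tau}(Y)$ and $J^\varphi_t(Z)=J^\varphi_{t\wedge\tau}(Y)$, and $A_t\uparrow A_\infty$ a.s.\ with $A_\infty=\tfrac12\int L^a_\tau\,\mu_\varphi(da)+J^\varphi_\tau$.

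Next I would choose a sequence of \emph{bounded} stopping times $(\sigma_n)$ with $\sigma_n\uparrow\infty$ such that $M^{\sigma_n}$ is a UI martingale (for instance, $\sigma_n=\tau'_n\wedge n$, where $(\tau'_n)$ is any localizing sequence for $M$). Then $\BE M_{\sigma_n}=0$ and taking expectations in the identity above at time $\sigma_n$ yields
\[
\BE\varphi(Z_{\sigma_n})=\BE\varphi(Z_0)+\BE A_{\sigma_n}.
\]
On the right-hand side, $A_{\sigma_n}\uparrow A_\infty$ a.s., so by monotone convergence (combined with Fubini for the $\mu_\varphi$-integral inside $A$) we obtain $\BE A_{\sigma_n}\to\BE A_\infty$.

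The main obstacle is the passage to the limit on the left-hand side, which is where I would use the UI martingale property of $Z$. Since $Z$ is UI, $Z_{\sigma_n}\to Z_\infty=Y_\tau$ a.s.\ as $n\to\infty$ and $Z_{\sigma_n}=\BE[Z_\infty\mid\FF_{\sigma_n}]$ by optional sampling. I would distinguish two cases. If $\BE\varphi(Y_\tau)<\infty$, Jensen's inequality gives $\varphi(Z_{\sigma_n})\le\BE[\varphi(Y_\tau)\mid\FF_{\sigma_n}]$, and the right-hand side is UI as a family of conditional expectations of a fixed integrable random variable; hence $\{\varphi(Z_{\sigma_n})\}_n$ is UI, and continuity of $\varphi$ together with a.s.\ convergence of $Z_{\sigma_n}$ gives $\BE\varphi(Z_{\sigma_n})\to\BE\varphi(Y_\tau)$, completing the identity (2.7). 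If instead $\BE\varphi(Y_\tau)=\infty$, Fatou's lemma gives $\liminf_n\BE\varphi(Z_{\sigma_n})\ge\BE\varphi(Y_\tau)=\infty$, so the identity above forces $\BE A_\infty=\infty$, and both sides of (2.7) are infinite. This dichotomy also yields the final ``in particular'' assertion.
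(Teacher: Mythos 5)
Your proposal is correct and follows essentially the same route as the paper: apply the It\^o--Meyer formula to the stopped process, localize the stochastic integral $\int\varphi'_-(Y_{s-})\,dY_s$, take expectations, and pass to the limit using monotone convergence for the increasing part and uniform integrability of $\{\varphi(Y_{\tau\wedge\sigma})\}$ (obtained from Jensen plus optional sampling) for the left-hand side, with Fatou handling the infinite case. The only difference is cosmetic — you split cases according to whether $\BE\varphi(Y_\tau)$ is finite, while the paper splits according to which side of the identity is assumed finite — and both versions yield the full equivalence.
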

\begin{proof}
Set
\[
M_t=\int^{t\wedge\tau}_0\varphi'_{-}(Y_{s-})\,dY_s, \quad t\ge0.
\]
By \cite[Theorem IV.29]{Pr}, $M$ is a local martingale.
Suppose  that $\mathbb E\varphi(Y_{\tau})<\infty$.
Let $Z_t=\varphi(Y_{\tau\wedge t})$, $t\ge0$. By  \cite[Corollary of Theorem I.18]{Pr}, for every stopping time $\sigma$, 
\[
\BE(Z_\tau|{\FF_\sigma})=\BE(\BE(\varphi(Y_{\tau})|\FF_{\tau})|\FF_{\sigma})
=\BE(\varphi(Y_{\tau})|\FF_{\tau\wedge\sigma}).
\]	
Therefore, by  Jensen's inequality and Doob's Optional Sampling Theorem (see \cite[Theorems I.16, I.19]{Pr}),
\[
\BE(Z_\tau|{\FF_\sigma})=\BE(\varphi(Y_{\tau})|\FF_{\sigma})
\ge \varphi(\BE(Y_{\tau}|\FF_{\tau\wedge\sigma}))=\varphi(Y_{\tau\wedge\sigma})=Z_{\sigma}.
\]
As a result the family $(Z_\sigma)$ over all stopping times $\sigma$ is uniformly integrable.
Let $(\tau_n)$ be a fundamental sequence for the  local martingale $M$. By the definition, for each $n\ge1$,  the stopped process $M^{\tau_n}=M_{\cdot\wedge\tau_n}$ is a uniformly integrable martingale. Therefore from  (\ref{eq2.1}) it follows that 
\[
\BE\varphi(Y_{\tau\wedge\tau_n})=\BE\varphi(Y_0)
+\frac12\mathbb E\int_{\BR}L^a_{\tau\wedge\tau_n}\,\mu_{\varphi}(da)
+\BE J^\varphi_{\tau\wedge\tau_n}.
\]
Letting $n\to \infty$ and applying  the Vitali theorem and the monotone convergence theorem we get (\ref{eq2.7}). 
Now suppose that the right-hand side of (\ref{eq2.7}) is finite.
Then, by Fatou's lemma,
\[
\BE\varphi(Y_{\tau})\le\liminf_{n\rightarrow\infty} \BE\varphi(Y_{\tau\wedge\tau_n})
\le\BE\varphi(Y_0)+\frac12\mathbb E\int_{\BR}L^a_{\tau\wedge\tau_n}\,\mu_{\varphi}(da)+\BE J^\varphi_{\tau}.
\]
Hence $\BE\varphi(Y_{\tau})<\infty$ and (\ref{eq2.7}) holds true.
\end{proof}

\begin{corollary}
\label{cor2.7}
Let $\varphi:\BR\to\BR_+$
be a convex function such that \mbox{\rm(\ref{eq1.2})} is satisfied
and $Y$ be a c\`adl\`ag $\BF$-adapted process.
Suppose that $\tau$ is an  $\BF$-stopping time and $(Y_{t\wedge\tau})_{t\ge 0}$ 
is a uniformly integrable  martingale. Then
\[
\BE\varphi(Y_\tau)=\BE\varphi(Y_0)+\frac12\BE\int_0^\tau g(Y_{s})\,d[Y]^c_s
+\BE J^\varphi_\tau.
\]
\end{corollary}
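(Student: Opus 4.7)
The plan is to derive this corollary directly from Proposition 2.6 by replacing the local-time integral with a pathwise integral against $d[Y]^c_s$, using the occupation time formula recorded in the remark following the It\^o--Meyer formula.

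First, I would apply Proposition 2.6 to $\varphi$, $Y$ and $\tau$ to obtain
\[
\BE\varphi(Y_\tau)=\BE\varphi(Y_0)+\frac12\BE\int_{\BR}L^a_{\tau}\,\mu_{\varphi}(da)+\BE J^\varphi_\tau,
\]
with the understanding (also from Proposition 2.6) that finiteness of either side forces finiteness of the other.

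Next, I would substitute $\mu_\varphi(da)=g(a)\,da$ and apply the occupation time formula with $f=g\in\mathcal B^+(\BR)$, which yields, pathwise for each $t\ge0$,
\[
\int_{\BR}L^a_{t}\,\mu_{\varphi}(da)=\int_{\BR}L^a_{t}g(a)\,da=\int_0^{t}g(Y_s)\,d[Y]^c_s.
\]
Evaluating at the stopping time $\tau$ (using joint measurability of $(a,t)\mapsto L^a_t$ and monotonicity in $t$), this gives
\[
\int_{\BR}L^a_{\tau}\,\mu_{\varphi}(da)=\int_0^{\tau}g(Y_s)\,d[Y]^c_s.
\]
Taking expectations and plugging back into the identity from Proposition 2.6 delivers the claimed formula.

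There is no real obstacle here: the only point worth checking carefully is that the rewriting is legitimate in the sense of monotone/Tonelli (both $L^a_t$ and $g$ are nonnegative, so no integrability issue arises before taking expectations), and that Proposition 2.6 itself already handles the equivalence of finiteness of both sides. Hence the corollary follows immediately by substitution.
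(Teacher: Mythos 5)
Your proof is correct and is exactly the route the paper intends: Corollary \ref{cor2.7} is stated as an immediate consequence of Proposition \ref{prop2.6} combined with the occupation time formula (already recorded in the remark following the It\^o--Meyer formula), via the substitution $\mu_\varphi(da)=g(a)\,da$. Nothing further is needed.
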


\section{Hardy--Stein type  identities}
\label{sec3}
We first prove Hardy--Stein identities for convex functions satisfying (\ref{eq1.2}) 
and then in the general case. 


\begin{theorem}
\label{th3.1}
Let $\varphi:\mathbb R\to\mathbb R_+$
be a convex function such that \mbox{\rm(\ref{eq1.2})} is satisfied. 
Assume that $u\in\mathscr H(D)$.
Then \mbox{\rm(\ref{eq1.4})} holds  for every  $x\in U\subset\subset D$.
\end{theorem}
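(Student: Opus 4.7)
The plan is to apply the It\^o--Meyer formula to the process $Y_t:=u(X_{t\wedge\tau_U})$ under $\BP_x$, for fixed $U\subset\subset D$ and $x\in U$, and then identify each of the three terms produced by Corollary \ref{cor2.7} with the corresponding term in (\ref{eq1.4}). By Proposition \ref{prop2.2}, $(Y_t)$ is a uniformly integrable $\BP_x$-martingale, so Corollary \ref{cor2.7} applies with $\tau=\tau_U$ and yields
\[
\BE_x\varphi(u(X_{\tau_U}))=\varphi(u(x))+\frac12\BE_x\int_0^{\tau_U}g(u(X_s))\,d[Y]^c_s+\BE_x J^\varphi_{\tau_U},
\]
using that $Y_0=u(x)$ $\BP_x$-a.s.\ and $Y_s=u(X_s)$ on $\{s\le\tau_U\}$. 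The left-hand side equals $\int_{U^c}\varphi(u(z))\,P_U(x,dz)$ by the definition of the Poisson kernel.

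The continuous-quadratic-variation term is then identified via formula (\ref{eq2.2}) of Remark 2.6 applied with $f(z)=g(u(z))\in\BB^+(U)$:
\[
\BE_x\int_0^{\tau_U}g(u(X_s))\,d[Y]^c_s=\int_UG_U(x,z)g(u(z))\,\mu^c_{\langle u\rangle}(dz),
\]
which is precisely the first integral on the right of (\ref{eq1.4}).

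For the jump functional $J^\varphi_{\tau_U}$, I would invoke a L\'evy system $(N,H)$ of $\BM$, the probabilistic counterpart of the Beurling--Deny decomposition (see, e.g., \cite[Section 5.3]{FOT}), which is characterized by
\[
N(z,dy)\,\mu_H(dz)=2J(dz,dy)\quad\text{on }\,E\times E\setminus\frak d, \qquad N(z,\{\partial\})\,\mu_H(dz)=\kappa(dz),
\]
where $\mu_H$ is the Revuz measure of $H$. With the convention $u(\partial)=0$, $\Delta Y_s=u(X_s)-u(X_{s-})$ on $(0,\tau_U]$, so
\[
J^\varphi_{\tau_U}=\sum_{0<s\le\tau_U}F_\varphi(u(X_{s-}),u(X_s))\mathbf1_{\{X_{s-}\ne X_s\}}.
\]
The L\'evy system formula then gives
\[
\BE_x J^\varphi_{\tau_U}=\BE_x\int_0^{\tau_U}\!\int_{E_\partial}F_\varphi(u(X_s),u(y))\,N(X_s,dy)\,dH_s,
\]
and splitting $E_\partial=E\cup\{\partial\}$ in the inner integral and applying the Revuz duality $\BE_x\int_0^{\tau_U}h(X_s)\,dH_s=\int_UG_U(x,z)h(z)\,\mu_H(dz)$ produces the two remaining integrals of (\ref{eq1.4}), with the factor $2$ coming from the relation $N\otimes\mu_H=2J$.

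The main obstacle I anticipate is justifying the L\'evy system computation without any a priori integrability hypothesis on $\varphi(u)$ or on the sum of jumps of $Y$. Convexity of $\varphi$, however, forces $F_\varphi\ge 0$, so every sum and integral above is nonnegative; monotone convergence and Fubini then legitimize all exchanges and yield an identity in $[0,\infty]$. The ``finite on one side iff finite on the other'' assertion of Corollary \ref{cor2.7} promotes this to (\ref{eq1.4}) without any further integrability hypotheses on $u$ or $\varphi(u)$.
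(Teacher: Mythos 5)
Your proposal is correct and follows essentially the same route as the paper: Proposition \ref{prop2.2} plus Corollary \ref{cor2.7} applied to $Y_t=u(X_{t\wedge\tau_U})$, identification of the bracket term through \eqref{eq2.2}, and the L\'evy system/Revuz duality computation for $J^\varphi_{\tau_U}$ with $N\otimes\mu_H=2J$ and $N(\cdot,\{\partial\})\mu_H=\kappa$. The only detail the paper makes explicit that you gloss over is that $Y_{s-}=u(X_{s-})$ on $[0,\tau_U]$ requires the quasi-continuity of $u$, which is supplied by Theorem \ref{th2.3} (i.e.\ $u\in\dot{\mathscr D}_{loc}(\EE^D)$); this is a minor point and does not affect the validity of your argument.
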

\begin{proof}
By Theorem \ref{th2.3}, $u\in\dot{\DDr}_{loc}(\EE^D)$.
Let
\begin{equation}
\label{eq3.7}
Y_t=u(X_{t\wedge\tau_U}),\quad t\ge0.
\end{equation}
We know that $Y$ is a uniformly integrable martingale under $\mathbb P_x$ for any  $x\in D$. Hence, by  Corollary \ref{cor2.7}, for  $x\in U$  we have
\begin{equation}
\label{eq3.1}
\BE_x\varphi(u(X_{\tau_U}))=\BE\varphi(u(X_0))
+\frac12\BE_x\int^{\tau_U}_0g(u(X_s))\,d[Y]^c_s
+\BE_xJ^\varphi_{\tau_U}.
\end{equation}
We next  show that the right-hand side of (\ref{eq3.1}) has the representation given in (\ref{eq1.4}). We start with the representation of the term $\mathbb E_xJ^\varphi_{\tau_U}$.
First note that since $u$ is quasi-continuous,  $Y_{s-}=u(X_{s-})$ for $s\in [0,\tau_U]$ (see \cite[Theorem 4.2.2]{FOT}).
Consider a L\`evy system  $(N,H)$ for $\BM$ (see, e.g., \cite[Appendix A.3]{FOT} or \cite[A.3.4]{CF}).  By \cite[(5.3.5), Theorem 5.3.1]{FOT}, the jumping measure $J$ and the killing measure $\kappa$ in the Beurling--Deny decomposition of $\EE$ are equal to
\[
J(dx\,dy)=(1/2)N(x,dy)\,\mu_H(dx),\qquad \kappa(dx)=N(x,\{\partial\})\,\mu_H(dx),
\]
where $\mu_H$ is the Revuz measure of the positive continuous additive functional $H$.
By \cite[(A.3.23)]{FOT} (or \cite[(A.3.31)]{CF}), we have
\begin{align}
\label{eq3.3}
\BE^\varphi_xJ_{\tau_U}
&=\BE_x\sum_{0<s\le\tau_U}F_\varphi(u(X_{s-}),u(X_{s}))\nonumber\\
&=\BE_x\int^{\tau_U}_0
\Big(\int_{E_\partial }F_\varphi(u(X_s),u(y))N(X_s,dy)\Big)\,dH_s
=R^U(\Phi\cdot\mu_H)(x),
\end{align}
where
\[
\Phi(z)=\int_{E}F_\varphi(u(z),u(y))N(z,dy)
+F_{\varphi}(u(z),0)N(z,\{\partial\}).
\]
Hence
\begin{align}
\label{eq3.4}
\BE_xJ^\varphi_{\tau_U}&=\int_UG_U(x,z)
\Big(\int_{E_\partial}F_\varphi(u(z),u(y))N(z,dy)\Big)\,\mu_H(dz)\nonumber \\
&=2\int_UG_U(x,z)\Big(\int_{E}F_\varphi(u(z),u(y))J(dz,dy)\Big)
+\int_UG_U(x,z)F_{\varphi}(u(z),0)\,\kappa(dz).
\end{align}
From (\ref{eq3.1})--(\ref{eq3.4}) and (\ref{eq2.2}) we get (\ref{eq1.4}), which proves the theorem.
\end{proof}

We shall see in Section \ref{sec5} that in some interesting situations the measure $\mu_{\langle u\rangle}^c$ can be identified.

We now turn to the general case. In what follows, for a harmonic function $u$ in $D$,  
$[0,\infty)\ni t\mapsto L^a_t$ denotes the local time at $a$ of the martingale $Y$ defined by  (\ref{eq3.7}). By the construction of local time (see, e.g., \cite[Section IV.7]{Pr}) and (\ref{eq3.1}), 
for every $a\in\BR$ the  process $L^a$ is a positive continuous additive functional of $\BM$ in the strict sense.
We denote by $l_a$ the unique smooth Borel measure on $E$ in the Revuz correspondence with 
$L^a$ (see \cite[Theorem 5.1.4]{FOT} or \cite[Theorem 4.1.1]{CF}).

\begin{theorem}
\label{th3.3}
Let $\varphi:\BR\to\BR_+$ be a convex function and  $u\in\mathscr H(D)$. Then \mbox{\rm(\ref{eq1.4})} holds  with the term involving $g$ replaced by
\mbox{\rm(\ref{eq1.7})}.
\end{theorem}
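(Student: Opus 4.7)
The plan is to follow essentially the same route as in the proof of Theorem \ref{th3.1}, but to invoke the full It\^o--Meyer formula through Proposition \ref{prop2.6} rather than its density-case specialization Corollary \ref{cor2.7}. With $Y_t=u(X_{t\wedge\tau_U})$ as in (\ref{eq3.7}), Proposition \ref{prop2.2} guarantees that $Y$ is a uniformly integrable $\BP_x$-martingale on $[0,\tau_U]$, so Proposition \ref{prop2.6} applied with $\tau=\tau_U$ yields
\[
\BE_x\varphi(u(X_{\tau_U}))=\varphi(u(x))+\frac12\BE_x\int_{\BR}L^a_{\tau_U}\,\mu_\varphi(da)+\BE_x J^\varphi_{\tau_U}.
\]

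The jump term is disposed of by copying lines (\ref{eq3.3})--(\ref{eq3.4}) verbatim: that computation relied only on the L\'evy system $(N,H)$ of $\BM$ and on the Beurling--Deny identifications of $J$ and $\kappa$, none of which referred to (\ref{eq1.2}). Hence $\BE_x J^\varphi_{\tau_U}$ produces exactly the two jump and killing contributions appearing on the right-hand side of (\ref{eq1.4}).

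What is left is to rewrite the local-time integral as (\ref{eq1.7}). By Tonelli's theorem (all integrands are nonnegative) this reduces to the pointwise identity
\[
\BE_x L^a_{\tau_U}=\int_U G_U(x,z)\,l_a(dz),\qquad x\in U,\; a\in\BR,
\]
which is the standard identification of the potential of the PCAF $L^a$ (killed at $\tau_U$) with the Green potential of its Revuz measure. The classical Revuz formula gives this equality for $m$-a.e.\ $x\in U$; since both sides are $(P^U_t)$-excessive and $\EE$ satisfies the absolute continuity condition, the excessivity argument based on \cite[Chapter VI, Proposition (1.3)]{BG} already used in the derivation of (\ref{eq2.2}) upgrades the equality to every $x\in U$. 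Integrating against $\mu_\varphi(da)$ and applying Tonelli once more gives exactly (\ref{eq1.7}), which combined with the jump-part representation yields the claimed formula.

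The only genuinely delicate point, and the one I expect to require the most care, is the preliminary assertion that for each $a\in\BR$ the local time $L^a$ of the semimartingale $Y$ is a strict-sense PCAF of $\BM$, so that its Revuz measure $l_a$ is a well-defined smooth Borel measure on $E$; this is handled in the paragraph immediately preceding the theorem via the construction of local time from \cite[Section IV.7]{Pr} and the control provided by (\ref{eq3.1}). Once this is granted, the rest of the proof is a routine assembly of components already developed for Theorem \ref{th3.1} and Remark \ref{rem2.5}.
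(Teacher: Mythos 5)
Your proposal is correct and follows essentially the same route as the paper: apply Proposition \ref{prop2.6} to $Y_t=u(X_{t\wedge\tau_U})$, reuse the L\'evy-system computation (\ref{eq3.3})--(\ref{eq3.4}) for the jump term, and convert $\BE_xL^a_{\tau_U}$ into $\int_UG_U(x,z)\,l_a(dz)$ via the Revuz correspondence before integrating against $\mu_\varphi(da)$. Your extra remarks on Tonelli and on upgrading the Revuz identity from $m$-a.e.\ to every $x\in U$ by excessivity are details the paper leaves implicit, not a different argument.
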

\begin{proof}
By Proposition \ref{prop2.6}, instead of (\ref{eq3.1}) we have
\[
\BE_x\varphi(u(X_{\tau_U}))=\BE_x\varphi(u(X_0))
+ \frac12\BE_x\int_{\BR}L^a_{\tau_U}\,\mu_{\varphi}(da)
+\BE_xJ^\varphi_{\tau_U}
\]
for $x\in U$. On the other hand, by the Revuz correspondence, 
\begin{equation}
\label{eq3.8}
\BE_xL^a_{\tau_U}=R^Ul_a(x)=\int_UG_U(x,z)\,l_a(dz).
\end{equation}
Using the above equalities and (\ref{eq3.4}) we get the desired result.
\end{proof}

\begin{remark}
The measure $l_a$ can be characterized  without recourse to the notion of local time. 	
Namely, $l_a$ is the unique positive smooth measure on $D$ such that  for every open $U\subset\subset D$,
\begin{equation}
\label{eq3.9}
R^Ul_a(x)=\int_{U^c}|u(z)-a|P_U(x,dz)- |u(x)-a|-R^Uj_a(x),\quad x\in U,
\end{equation}
where 
\begin{align*}
j_a(d x)&=2 \int_{E}\{|u(y)-a|-|u(x)-a|-\sgn(u(x)-a)(u(y)-u(x))\} J(dx,dy)\\
&\quad+\int_E\{|a|-|u(x)-a|+\sgn(u(x)-a)u(x)\,\kappa(dx)\}.
\end{align*}
Indeed, (\ref{eq3.9}) follows from (\ref{eq3.8}), definition of local time  (see \cite[p. 216]{Pr}) and \cite[(A.3.33)]{CF}. 
Furthermore, if $\nu$ is a smooth measure on $D$ such that $R^Ul_a=R^U\nu$ for every $U\subset\subset D$, then $l_a=\nu$ on $D$
(this follows easily from the resolvent identity and  \cite[Theorem 5.1.3]{FOT}).
\end{remark}

\section{Conditional Hardy--Stein identities}

\begin{lemma}
\label{lem4.1}
If $h$ is a nonnegative function on $E$ that is harmonic in $D$ and $h>0$ q.e. in $D$, then $1/h\in\dot{\DDr}_{loc}(\EE^D)$.
\end{lemma}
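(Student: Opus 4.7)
The plan is to bootstrap from the fact that $h$ itself lies in $\dot{\DDr}_{loc}(\EE^D)$, which is already supplied by Theorem~\ref{th2.3}(i), and convert a local representation of $h$ into one of $1/h$ by composing with a suitable bounded Lipschitz truncation that is designed to act like the reciprocal on a bounded interval of the positive reals.

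First, I would pick $(V_n)_{n\ge 1}$ and $(w_n)_{n\ge 1}\subset \DDr(\EE^D)$ realizing $h\in\dot{\DDr}_{loc}(\EE^D)$ via Theorem~\ref{th2.3}(i): the $V_n$ are finely open, $\bigcup_n V_n = D$ q.e., and $h = w_n$ q.e.\ on $V_n$. Next, since $h$ is nonnegative and harmonic in $D$, the argument used in the proof of Theorem~\ref{th2.3}(iii) shows that each truncation $h\wedge k$ is $(P_t^D)$-excessive, so $h = \sup_k (h\wedge k)$ is $(P_t^D)$-excessive as well and therefore finely continuous on $D$. Combined with $h < \infty$ on $D$ (from the harmonic mean-value property) and the standing hypothesis $h > 0$ q.e.\ on $D$, this yields that the level sets $A_{k,l}:=\{1/l < h < k\}$ are finely open in $D$ and $\bigcup_{k,l\in\mathbb N} A_{k,l} = D$ q.e. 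Intersecting gives a countable family of finely open sets $V_{n,k,l}:= V_n\cap A_{k,l}$ covering $D$ q.e., on each of which $h\in(1/l,k)$ and $w_n = h$ quasi-everywhere.

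For each pair $(k,l)$, I would fix a bounded Lipschitz function $\psi_{k,l}\colon\BR\to\BR$ with $\psi_{k,l}(0) = 0$ and $\psi_{k,l}(y) = 1/y$ on $[1/l,k]$ (e.g., linear on $[0,1/l]$, equal to $1/y$ on $[1/l,k]$, constant on $[k,\infty)$, and zero on $(-\infty,0]$). The standard stability of Dirichlet spaces under post-composition with Lipschitz functions vanishing at $0$ then gives $\psi_{k,l}(w_n)\in\DDr(\EE)$; and since $w_n = 0$ q.e.\ on $E\setminus D$ while $\psi_{k,l}(0)=0$, we have $\psi_{k,l}(w_n)\in\DDr(\EE^D)$. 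On $V_{n,k,l}$ the identity $w_n = h\in(1/l,k)$ forces $\psi_{k,l}(w_n) = 1/h$ q.e., which is precisely the data required for $1/h\in\dot{\DDr}_{loc}(\EE^D)$.

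The only genuinely delicate ingredient is the fine continuity of $h$, needed so that the level sets $A_{k,l}$ are finely open; this, however, follows by monotone approximation from Theorem~\ref{th2.3}(iii) together with the stability of the class of excessive functions under increasing limits, so once that is observed the remainder is routine Lipschitz bookkeeping.
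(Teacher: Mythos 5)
Your proof is correct, but it follows a genuinely different route from the paper's. The paper argues entirely within the excessive-function framework: on $V_n=\{h>1/n\}$ it composes $h$ with the concave increasing map $y\mapsto n-1/y$ to obtain a bounded $(P^{V_n}_t)$-excessive function, then invokes \cite[Proposition III.1.5]{MR} and \cite[Lemma 5.2]{KR} to identify $n-1/h$ locally with elements of the Dirichlet space, and finally subtracts the constant. You instead bootstrap from Theorem \ref{th2.3}(i), which already gives $h\in\dot{\DDr}_{loc}(\EE^D)$ with local representatives $w_n\in\DDr(\EE^D)$, refine the cover by the finely open level sets $\{1/l<h<k\}$ --- legitimately, since $h$ is finite on $D$ by the mean-value property and finely continuous as an increasing limit of the $(P^D_t)$-excessive truncations $h\wedge k$ --- and post-compose $w_n$ with a bounded Lipschitz modification $\psi_{k,l}$ of $y\mapsto 1/y$ vanishing at $0$, so that the normal-contraction property (\cite[Theorem 1.4.2]{FOT}) keeps $\psi_{k,l}(w_n)$ in $\DDr(\EE)$, and $\psi_{k,l}(0)=0$ keeps it in $\DDr(\EE^D)$. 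Both proofs rely on the fine continuity of $h$ so that the relevant level sets are finely open (the paper uses this tacitly for $V_n$; you make it explicit), and both use that $h>0$ q.e.\ and $h<\infty$ on $D$ to see that the resulting countable cover is q.e.\ equal to $D$. What your version buys is that it avoids a second appeal to the excessive-function machinery of \cite{MR} and replaces the ``concave function of an excessive function is excessive'' step by the more elementary Lipschitz stability of the form, at the price of a triply indexed (but still countable) cover; the paper's argument is marginally shorter because it never needs to intersect covers. Both arguments are complete.
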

\begin{proof}
Let $V_n=\{h> 1/n\}$, $n\ge1$. Since $h$ is nonnegative in $E$ and harmonic in $D$,  it is  $(P^{V_n}_t)$-excessive. From this and the fact that $(0,\infty)\ni y\mapsto n-1/y$ is concave it follows that  $n-1/h$ is $(P^{V_n}_t)$-excessive for each $n\ge1$.	
Let $U_n$ be a finely open set such that $U_n\subset V_n$
and $\mbox{Cap}_{\EE^{V_n}}(U_n)<\infty$. By \cite[Proposition III.1.5]{MR},  there exists a $(P^{V_n}_t)$-excessive function $\eta_n\in\mathscr D(\EE^{V_n})$ such that $n-1/h=\eta_n$ q.e. on $U_n$. This together with \cite[Lemma 5.2]{KR} and the fact that $\bigcup_{n\ge 1}V_k=D$ shows that $n-1/h\in\dot{\DDr}_{loc}(\EE^D)$. Since  any constant belongs to $\dot{\DDr}_{loc}(\EE^D)$, this proves the lemma.  
\end{proof}

In what follows we denote  by $\mathscr H^{(s)}(D)$  the set of all singular harmonic functions on $D$ that is the set of all $u\in \mathscr H(D)$ such that $u=0$ a.e. in $D^c$. Clearly, in case $\EE$ is local, $\mathscr H^{(s)}(D) =i_D(\mathscr H(D))$, where $i_D(u):=\mathbf1_D u$ (see Remark \ref{rem2.2}).

\begin{theorem}
\label{th4.1}
Let $\varphi:\BR\to\BR_+$
be a convex function such that  \mbox{\rm(\ref{eq1.2})} is satisfied and let 
$U\subset\subset D$. Suppose that  $u, h\in \mathscr H^{(s)}(D)$,  and 
$h$ is strictly positive in $D$.  Then, for every $x\in U$,
\begin{align}
\label{eq4.1}
\int_{U^c}\varphi\Big(\frac{u(z)}{h(z)}\Big)h(z)\, P_U(x,dz)&=h(x)\varphi\Big(\frac{u(x)}{h(x)}\Big)+
\frac12\int_UG_U(x,z)h(z)g\Big(\frac{u(z)}{h(z)}\Big)\, \mu^c_{\langle u/h\rangle}(dz)
\nonumber \\
&+2\int_U G_U(x,z) \int_{E}F_\varphi\Big(\frac{u(z)}{h(z)},\frac{u(y)}{h(y)}\Big)h(y)\,J(dz,dy).
\end{align}
\end{theorem}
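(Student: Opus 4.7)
The plan is to reduce the conditional identity (\ref{eq4.1}) to the unconditional identity (\ref{eq1.4}) of Theorem \ref{th3.1} via a Doob $h$-transform. First I would verify that $u/h\in\dot{\DDr}_{loc}(\EE^D)$, so that $\mu^c_{\langle u/h\rangle}$ is available through Remark \ref{rem2.5}; this combines Theorem \ref{th2.3}(i) applied to $u$ with Lemma \ref{lem4.1} applied to $h$, taking products of localizing functions on the intersection of the two finely open coverings of $D$ (with truncation to keep the product in $\DDr(\EE^D)$). Next, set $H_t=h(X_{t\wedge\tau_U})$, $Y_t=u(X_{t\wedge\tau_U})$, and $Z_t=Y_t/H_t=(u/h)(X_{t\wedge\tau_U})$. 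By Proposition \ref{prop2.2}, $Y$ and $H$ are uniformly integrable $\BP_x$-martingales and $H_0=h(x)>0$, so $d\mathbb{Q}_x^h/d\BP_x:=H_{\tau_U}/h(x)$ defines a probability measure on $\FF_{\tau_U}$. A Bayes and tower computation using the martingale property of both $Y$ and $H$ yields $\BE_{\mathbb{Q}_x^h}(Z_t\mid\FF_s)=Z_s$, so $Z$ is a uniformly integrable $\mathbb{Q}_x^h$-martingale closed by $Z_{\tau_U}$ (with $\BE_{\mathbb{Q}_x^h}|Z_{\tau_U}|=\BE_x|u(X_{\tau_U})|/h(x)<\infty$). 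Applying Corollary \ref{cor2.7} to $(Z,\varphi,\tau_U)$ under $\mathbb{Q}_x^h$ and multiplying through by $h(x)$ converts every $\mathbb{Q}_x^h$-expectation into an $H_{\tau_U}$-weighted $\BP_x$-expectation; the left-hand side of the resulting identity equals the left-hand side of (\ref{eq4.1}), since the contribution of $\{X_{\tau_U}\in D^c\}$ vanishes on both sides via $h\equiv 0$ on $D^c$, $h(\partial)=0$, and the convention $0\cdot\varphi(0/0)=0$.

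It then remains to identify the two nontrivial right-hand-side terms. For the continuous part, $A_t:=\int_0^t g(Z_s)\,d[Z]^c_s$ is continuous nondecreasing so $[H,A]=0$, and the uniform integrability of $H$ together with integration by parts gives $\BE_x[H_{\tau_U}A_{\tau_U}]=\BE_x\int_0^{\tau_U}h(X_s)g(Z_s)\,d[Z]^c_s$; feeding this into (\ref{eq2.2}) applied to $u/h$ with test function $z\mapsto h(z)g(u(z)/h(z))$ produces the $\mu^c_{\langle u/h\rangle}$-integral in (\ref{eq4.1}). For the jump part, put $B_t:=\sum_{s\le t}F_\varphi(Z_{s-},Z_s)$; the product rule now contributes $[H,B]_t=\sum_{s\le t}\Delta H_s\,F_\varphi(Z_{s-},Z_s)$, and the two pieces combine into $\BE_x[H_{\tau_U}B_{\tau_U}]=\BE_x\sum_{s\le\tau_U}h(X_s)F_\varphi(Z_{s-},Z_s)$. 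Then the L\'evy system formula and Revuz duality, applied exactly as in (\ref{eq3.3})--(\ref{eq3.4}) but carrying the extra factor $h(y)$ into the inner integral over $y\in E_\partial$, yield the final term of (\ref{eq4.1}); the killing contribution drops out because $h(\partial)=0$.

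The chief obstacle is the rigor around the $h$-transform: since $h$ is singular harmonic, $H_{\tau_U}$ vanishes on a non-trivial part of $U^c$, so $\mathbb{Q}_x^h$ is concentrated on $\{H_{\tau_U}>0\}$, and one has to check that the identity produced on this set matches (\ref{eq4.1}) as written—this reduces to the conventions flagged above. A secondary concern is securing the integrability required to apply Corollary \ref{cor2.7} and to justify the two integration-by-parts steps for $HA$ and $HB$; this is standard and can be handled by a truncation argument on $u$ together with the localization built into $\dot{\DDr}_{loc}(\EE^D)$, passing to the limit through monotone and dominated convergence.
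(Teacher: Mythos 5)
Your proposal is correct and follows essentially the same route as the paper: the paper likewise reduces (\ref{eq4.1}) to Corollary \ref{cor2.7} via Doob's $h$-transform of the process killed on leaving $D$ (after first securing $u/h\in\dot{\DDr}_{loc}(\EE^D)$ from Theorem \ref{th2.3} and Lemma \ref{lem4.1}), converts the transformed expectations back into $h$-weighted $\BP_x$-expectations, and finishes with the L\'evy system and the Revuz correspondence exactly as in the proof of Theorem \ref{th3.1}. The only difference is one of implementation: where you realize the transform as an explicit change of measure $d\mathbb Q^h_x=(h(X_{\tau_U})/h(x))\,d\BP_x$ on $\FF_{\tau_U}$ and justify the conversions by integration by parts, the paper invokes the Chung--Walsh construction of the $h$-process and cites Song's theorem for the transformation of the relevant additive functionals and kernels.
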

\begin{proof}
First note that by Theorem \ref{th2.3},  Lemma \ref{lem4.1}, and 
\cite[Theorem 1.4.2]{FOT}$ u/h\in\dot{\DDr}_{loc}(\EE^D)$. Therefore  $\mu^c_{\langle u/h\rangle}$ is well defined by Remark \ref{rem2.5}.  Let  $U\subset\subset V\subset\subset D$.
We denote by $\tilde\BP_x$  the law of the process $\mathbb M$ killed upon leaving $D$, and 
next we let    $\tilde{\BP}^h_x$ denote the law of Doob's $h$-transform of $\tilde{\mathbb M}$ 
(for a construction of the transformed process see, e.g., \cite[Section 11.3]{CW}).
Then  $u/h$ is harmonic in $D$ with respect to $\tilde{\mathbb P}^h_x$. Indeed,
by \cite[Theorem 11.9]{CW},
\[
\tilde{\mathbb E}^h_x(u/h)(X_{\tau_U})=\frac{\tilde{\mathbb E}_x\left[\mathbf1_{\{\tau_U<\tau_D\}}(u/h)(X_{\tau_U})h(X_{\tau_U})\right]}{h(x)}
=\frac{\mathbb E_xu(X_{\tau_U})}{h(x)}=(u/h)(x).
\]
By Corollary \ref{cor2.7} applied to the transformed process,  for every  $x\in U$ we have
\begin{align*}
	\int_{U^c}\varphi(u/h)(z)\,\tilde P^h_U(x,dz)
	&=\varphi(u/h)(x)+\frac12\tilde\BE_x^h\int_0^{\tau_U}
	g(u/h)(X_s)\,d[(u/h)(X)]^c_s \nonumber\\
	&\quad+\tilde\BE_x^h\sum_{0< s\le \tau_U}F_\varphi((u/h)(X_{s-}),(u/h)(X_s)).
\end{align*}
From the above equality and  the definition of the process $Z$ and Doob's $h$-transform  we get
\begin{align*}
	\int_{U^c}\varphi(u/h)(z) \frac{h(z)}{h(x)}P_U(x,dz)
	&=\varphi(u/h)(x)\\
	&\quad+\frac{1}{2h(x)}\BE_x\int_0^{\tau_U}
	g(u/h)(X_s)h(X_s)\,d[(u/h)(X)]^c_s  \nonumber\\
	&\quad+\frac{1}{h(x)}\BE_x\sum_{0< s\le \tau_U}F_{\varphi}((u/h)(X_{s-}),(u/h)(X_s))h(X_{s})
\end{align*}
(see \cite[Theorem 3.1]{Song}). 
By using L\'evy system for $\mathbb M$, we conclude   that the last term above equals
\begin{align*}
\frac{1}{h(x)}\int_UG_U(x,z)h(z)
\Big(\int_{E_\partial}F_\varphi(\frac{u}{h}(z),\frac{u}{h}(y))N(z,dy)\Big)\,\mu_H(dz).
\end{align*}
To get (\ref{eq4.1}) we now repeat the arguments from the proof of 
Theorem \ref{th3.1} (we  use  the Revuz correspondence between $[(u/h)(X)]^c$ and the measure $\mu^c_{\langle u/h\rangle}$).
\end{proof}

We now turn to the case of general convex function.

\begin{theorem}
Let $u,h$ satisfy the assumptions of Theorem \ref{th4.1}.
Then, for every $x\in U$, \mbox{\rm(\ref{eq4.1})} holds true with the term involving $g$ replaced by 
$\frac12\int_{\BR}\int_UG^h_U(x,z)\,l^h_a(dz)\,\mu_\varphi(da)$,  where $G^h_U, l^h_a$ are defined as $G_U,l_a$ but with the Markov process $\mathbb M$ replaced by its Doob's $h$-transform.
\end{theorem}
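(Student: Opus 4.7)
The plan is to repeat the argument of Theorem~\ref{th4.1} verbatim, except that Corollary~\ref{cor2.7} is replaced by the more general Proposition~\ref{prop2.6}, which applies to any convex $\varphi:\BR\to\BR_+$ without the absolute continuity hypothesis \eqref{eq1.2} on $\mu_\varphi$.

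First, as in the proof of Theorem~\ref{th4.1}, pass to the Doob's $h$-transform $\tilde{\BM}^h$ of the killed process $\tilde{\BM}$ on $D$, whose law starting from $x$ is denoted by $\tilde{\BP}^h_x$. Under $\tilde{\BP}^h_x$ the function $u/h$ is harmonic in $D$, so by Proposition~\ref{prop2.2} the process $Y_t=(u/h)(X_{t\wedge\tau_U})$ is a uniformly integrable $\tilde{\BP}^h_x$-martingale for $x\in U\subset\subset D$. Applying Proposition~\ref{prop2.6} to $Y$ at the stopping time $\tau_U$ under $\tilde{\BP}^h_x$ yields
\[
\tilde{\BE}^h_x\varphi(Y_{\tau_U})=\varphi(Y_0)+\tfrac12\tilde{\BE}^h_x\int_{\BR}L^{a,h}_{\tau_U}\,\mu_\varphi(da)+\tilde{\BE}^h_x J^\varphi_{\tau_U},
\]
where $L^{a,h}$ is the local time of $Y$ under $\tilde{\BP}^h_x$.

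Next, by the Revuz correspondence relative to $\tilde{\BM}^h$, in analogy with \eqref{eq3.8}, one has $\tilde{\BE}^h_x L^{a,h}_{\tau_U}=\int_U G^h_U(x,z)\,l^h_a(dz)$, which produces exactly the term that is supposed to replace the $g$-term. The jump contribution $\tilde{\BE}^h_x J^\varphi_{\tau_U}$ is treated precisely as in the proof of Theorem~\ref{th4.1}: using the L\'evy system for $\tilde{\BM}^h$ and the relation between the jumping kernels of $\BM$ and its $h$-transform, it turns into the Bregman divergence integral weighted by $h(y)J(dz,dy)$. Finally, rewriting $\tilde{\BE}^h_x\varphi(Y_{\tau_U})$ as an integral against $P_U(x,dz)$ via the defining identity $\tilde{P}^h_U(x,dz)=(h(z)/h(x))P_U(x,dz)$ of Doob's transform and multiplying the resulting equation through by $h(x)$ delivers the claimed identity.

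The only point requiring any care is the Revuz identification of $L^{a,h}$ with a smooth measure $l^h_a$ on $D$ relative to the transformed form. This is routine, however, since the $h$-transform of a symmetric regular transient Dirichlet form satisfying the absolute continuity condition is again a Dirichlet form of the same type (now on $L^2(D;h^2\,m)$), so the general local-time/Revuz machinery invoked in Theorem~\ref{th3.3} transfers to $\tilde{\BM}^h$ without modification. Thus no genuinely new difficulty arises beyond what was already handled in Theorems~\ref{th3.3} and~\ref{th4.1}.
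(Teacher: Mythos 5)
Your proposal is correct and follows essentially the same route as the paper, which simply repeats the proof of Theorem \ref{th4.1} with Proposition \ref{prop2.6} in place of Corollary \ref{cor2.7} and identifies the local-time term via the Revuz correspondence for the $h$-transformed process as in Theorem \ref{th3.3}. The additional care you take with the Revuz identification of $L^{a,h}$ with $l^h_a$ is a reasonable elaboration of a point the paper leaves implicit.
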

\begin{proof}
It suffices to repeat the proof Theorem \ref{th4.1} but now we use  Proposition \ref{prop2.6} instead of Corollary	\ref{cor2.7}.
\end{proof}

\section{Hardy spaces,   conditional Hardy spaces and Littlewood--Paley type estimates}

In this  section,  we assume that $\varphi$ satisfies \eqref{eq1.2}. We stress, however, that
all the presented results   have natural counterparts in the general case.

\begin{definition}[Hardy spaces]
Let $x\in D$.
We denote by  $\mathscr H^\varphi_x (D)$  the harmonic Hardy space on $D$ that consists of functions $u\in\mathscr H(D)$ satisfying 
\[
|u|_{\mathscr H^\varphi_{x}(D)}:=\sup_{U\subset\subset D}\int_{U^c}\varphi(u(z))\, P_U(x,dz)<\infty.
\]
\end{definition}

\begin{definition}[Conditional Hardy spaces]
Let $x\in D$. For  $h\in \mathscr H^{(s)}(D)$ that is strictly positive in $D$ we denote by $\mathscr H^\varphi_{x;h} (D)$  the conditional harmonic Hardy space on $D$  that consists
of functions $u\in\mathscr H^{(s)}(D)$ satisfying 
\[
|u|_{\mathscr H^\varphi_{x;h}(D)}:=\sup_{U\subset\subset D}\int_{U^c}\varphi\Big(\frac{u(z)}{h(z)}\Big)\frac{h(z)}{h(x)}\, P_U(x,dz)<\infty.
\]
\end{definition}

\begin{theorem}
\label{th5.3}
Let the assumptions  of Theorem \ref{th3.1} hold and  $\EE^D$ be transient. Then for  every $x\in D$,
\begin{align}
\label{eq5.8}
&|u|_{\mathscr H^\varphi_x(D)}=\varphi(u(x))+
\frac12\int_DG_D(x,z)g(u(z))\,\mu_{\langle u\rangle}^c(dz)
\nonumber\\
&\qquad+\int_DG_D(x,z)\Big(\int_{E}F_\varphi(u(z),u(y))J(dz,dy)
+\int_EF_{\varphi}(u(z),0)\,k(dz)\Big).
\end{align}
If, in addition, $u(X_{\tau_D})\in L^1(\mathbb P_x)$ and $u(x)=\mathbb E_xu(X_{\tau_D})$,  $x\in D$, then  the left-hand side of \mbox{\rm(\ref{eq5.8})} can be replaced by $\BE_x\varphi(u(X_{\tau_D}))$.
\end{theorem}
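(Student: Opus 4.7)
The plan is to combine Theorem~\ref{th3.1} with an exhaustion of $D$ by relatively compact open sets and pass to the limit via monotone convergence. Fix $x\in D$ and choose an increasing sequence $(U_n)$ with $U_n\subset\subset D$, $x\in U_1$ and $\bigcup_n U_n=D$. Since any $U\subset\subset D$ has compact closure contained in $D$, we have $\bar U\subset U_n$ for $n$ large, so the supremum defining $|u|_{\mathscr H^\varphi_x(D)}$ is realized as a limit along $(U_n)$.

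The key observation is that every term on the right-hand side of \eqref{eq1.4} is nondecreasing in $U$ under inclusion. Indeed, $R^Uf(x)=\BE_x\int_0^{\tau_U}f(X_s)\,ds$ is monotone in $U$, so $G_U(x,\cdot)\le G_{U'}(x,\cdot)$ whenever $U\subset U'$; moreover $g\ge 0$ (because $\mu_\varphi\ge 0$) and $F_\varphi\ge 0$ (the Bregman divergence of a convex function is nonnegative). Consequently the left-hand side of \eqref{eq1.4} is also nondecreasing in $U$, so the supremum over $U\subset\subset D$ coincides with the limit along $(U_n)$. By transience of $\EE^D$, $G_{U_n}(x,\cdot)\uparrow G_D(x,\cdot)$, and applying monotone convergence separately to each of the three positive terms in \eqref{eq1.4} (with $U=U_n$) produces \eqref{eq5.8}.

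For the additional assertion, the hypothesis $u(x)=\BE_xu(X_{\tau_D})$ combined with $u(X_{\tau_D})\in L^1(\BP_x)$ implies that $M_t:=u(X_{t\wedge\tau_D})$ is a uniformly integrable $\BP_x$-martingale closed by $M_\infty=u(X_{\tau_D})$. Optional stopping gives $u(X_{\tau_U})=\BE_x(u(X_{\tau_D})\mid\FF_{\tau_U})$ for every $U\subset\subset D$ containing $x$, so by conditional Jensen $\BE_x\varphi(u(X_{\tau_U}))\le\BE_x\varphi(u(X_{\tau_D}))$; conversely, $\tau_{U_n}\uparrow\tau_D$ and $M_{\tau_{U_n}}\to M_{\tau_D}$ $\BP_x$-a.s.\ by c\`adl\`ag regularity of martingale paths, and Fatou's lemma yields the reverse inequality. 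Since $P_U(x,\cdot)$ is the distribution of $X_{\tau_U}$ under $\BP_x$, one has $\int_{U^c}\varphi(u(z))\,P_U(x,dz)=\BE_x\varphi(u(X_{\tau_U}))$, and so $|u|_{\mathscr H^\varphi_x(D)}=\BE_x\varphi(u(X_{\tau_D}))$.

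The main point requiring care is the sense in which $G_{U_n}\uparrow G_D$ is strong enough to exchange limit and integration against the possibly singular smooth measures $\mu^c_{\langle u\rangle}$, $J$ and $\kappa$. This follows from the probabilistic interpretation $R^{U_n}\nu(x)=\BE_x\int_0^{\tau_{U_n}}dA^\nu_s\uparrow\BE_x\int_0^{\tau_D}dA^\nu_s=R^D\nu(x)$ via the Revuz correspondence, applied to each of the three positive smooth measures appearing on the right of \eqref{eq1.4}; once this monotone increase is available at the level of potentials, the termwise passage to the limit is immediate.
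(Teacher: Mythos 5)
Your proof is correct and follows essentially the same route as the paper, which simply invokes Theorem \ref{th3.1} together with the exhaustion/monotone-convergence argument of \cite[Proposition 3.4]{BGPPR2} and \cite[Lemma 6.1.5]{CF}; your use of the Revuz correspondence to justify the termwise monotone passage $R^{U_n}\nu\uparrow R^D\nu$ against the (possibly $m$-singular) smooth measures is exactly the right way to handle the only delicate point in the first assertion. In the addendum, the Fatou step tacitly identifies the a.s.\ limit of $u(X_{\tau_{U_n}})$ with $u(X_{\tau_D})$, which requires quasi-left-continuity of the Hunt process (so that $X_{\tau_{U_n}}\to X_{\tau_D}$ and the closing variable is measurable with respect to $\bigvee_n\mathcal F_{\tau_{U_n}}$) --- this is precisely what the cited Lemma 6.1.5 of Chen--Fukushima supplies, so it is worth stating explicitly.
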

\begin{proof}
Follows from Theorem \ref{th3.1}. We argue as in the proof of \cite[Proposition 3.4]{BGPPR2} (see also the proof of \cite[Lemma 6.1.5]{CF} with $\varphi=u(X_{\tau_D})$).
\end{proof}

\begin{theorem}
\label{th5.4}
Let the assumptions  of Theorem \ref{th4.1} hold and  $\EE^D$ be transient. Then for  every $x\in D$,
\begin{align}
\label{eq5.9}
h(x)|u|_{\mathscr H^\varphi_{x;h}(D)}&=\varphi\Big(\frac{u(x)}{h(x)}\Big)+
\frac12\int_DG_D(x,z)h(z)g\Big(\frac{u(z)}{h(z)}\Big)\, \mu^c_{\langle u/h\rangle}(dz)  \nonumber \\
&+2\int_U G_D(x,z) \int_{E}F_\varphi\Big(\frac{u(z)}{h(z)},\frac{u(y)}{h(y)}\Big)h(y)\,J(dz,dy).
\end{align}
\end{theorem}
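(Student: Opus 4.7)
The plan is to deduce \eqref{eq5.9} from Theorem~\ref{th4.1} by taking an exhaustion of $D$, in perfect parallel with the proof of Theorem~\ref{th5.3}. Fix $x\in D$. For every open $U$ with $x\in U\subset\subset D$, Theorem~\ref{th4.1}, divided throughout by $h(x)$, gives
\begin{align*}
\int_{U^c}\varphi\Big(\frac{u(z)}{h(z)}\Big)\frac{h(z)}{h(x)}\,P_U(x,dz)
&=\varphi\Big(\frac{u(x)}{h(x)}\Big)+\frac{1}{2h(x)}\int_UG_U(x,z)h(z)g\Big(\frac{u(z)}{h(z)}\Big)\,\mu^c_{\langle u/h\rangle}(dz)\\
&\quad+\frac{2}{h(x)}\int_UG_U(x,z)\int_{E}F_\varphi\Big(\frac{u(z)}{h(z)},\frac{u(y)}{h(y)}\Big)h(y)\,J(dz,dy);
\end{align*}
the left-hand side is exactly the quantity appearing under the supremum in the definition of $|u|_{\mathscr H^\varphi_{x;h}(D)}$, so the task reduces to identifying the $U\subset\subset D$ supremum of the right-hand side with the corresponding expression on $D$ with $G_D$ in place of $G_U$.

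The decisive observation is that every summand on the right-hand side is nonnegative and monotone nondecreasing in $U$. Nonnegativity: $\varphi\ge 0$ by hypothesis; $g\ge 0$ because $\mu_\varphi$, the distributional derivative of the nondecreasing function $\varphi'_-$, is a positive measure; and the Bregman divergence $F_\varphi$ is nonnegative by convexity of $\varphi$. Monotonicity: for $U\subset U'\subset\subset D$, $\tau_U\le\tau_{U'}$ yields $R^Uf\le R^{U'}f$ for $f\ge 0$, which together with the absolute continuity condition forces $G_U(x,\cdot)\le G_{U'}(x,\cdot)$ as excessive functions. Consequently the left-hand side is also monotone in $U$, so $\sup_{U\subset\subset D}$ coincides with $\lim_n$ along any exhaustion $U_n\uparrow D$ by relatively compact open sets containing $x$. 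Since $\EE^D$ is transient, $\tau_{U_n}\uparrow\tau_D$ forces $G_{U_n}(x,\cdot)\uparrow G_D(x,\cdot)$, and applying monotone convergence termwise to the two integrals---after extending $G_{U_n}(x,\cdot)$ by zero off $U_n$---produces \eqref{eq5.9} upon multiplication by $h(x)$.

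The main point that needs verification is the passage to the limit in the double integral against the jumping measure $J$. Rewriting it as the integral of the nonnegative integrand $\mathbf 1_{U_n}(z)G_{U_n}(x,z)F_\varphi\bigl(\tfrac{u}{h}(z),\tfrac{u}{h}(y)\bigr)h(y)$ against $J(dz,dy)$, one sees that this integrand increases pointwise to $\mathbf 1_{D}(z)G_D(x,z)F_\varphi\bigl(\tfrac{u}{h}(z),\tfrac{u}{h}(y)\bigr)h(y)$, reducing the matter to a direct application of the monotone convergence theorem on the measure space $(E\times E\setminus\mathfrak d, J)$; the analogous step for the single integral against $\mu^c_{\langle u/h\rangle}$ is handled identically. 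With these two monotone convergence arguments in place, the theorem follows from Theorem~\ref{th4.1} with no further work.
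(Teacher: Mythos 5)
Your proof is correct and is essentially the paper's own argument: the paper proves Theorem \ref{th5.4} by repeating the proof of Theorem \ref{th5.3}, i.e.\ by taking an exhaustion $U_n\uparrow D$ in the identity of Theorem \ref{th4.1} and using nonnegativity of every term together with $G_{U_n}(x,\cdot)\uparrow G_D(x,\cdot)$ and monotone convergence, exactly as you do. Note only that carrying the constants through as you describe yields $h(x)\varphi\bigl(u(x)/h(x)\bigr)$ as the first term on the right-hand side of \eqref{eq5.9} (and $\int_D$ in place of $\int_U$ in the last term), so the small discrepancy with the displayed formula is a typo in the statement rather than a gap in your argument.
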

\begin{proof}
The proof goes as the proof of Theorem \ref{th5.3} but now we use Theorem \ref{th4.1}.
\end{proof}

We now turn to integrable versions of Hardy norms. For a probability measure $\nu$ on $D$ we set
\[
|u|_{\mathscr H^\varphi_\nu(D)}= \int_D|u|_{\mathscr H^\varphi_x(D)}\,\nu(dx),   \qquad|u|_{\mathscr H^\varphi_{\nu;h}(D)}= \int_D|u|_{\mathscr H^\varphi_{x;h}(D)}\,\nu(dx).
\]
In case $\varphi(x)=|x|^p$ we write   $\|u\|_{\mathscr H^p_\nu(D)}$,  $\|u\|_{\mathscr H^p_{\nu;h}(D)}$	($L^p$-Hardy norms). 
The following corollary gives an equivalent description of these quantities.

\begin{corollary}
Let   $\nu$ be a probability measure on $D$ and $h\in\mathscr H^{(s)}(D)$ be strictly positive in $D$.
\begin{enumerate}[\rm(i)]
\item If $u\in\mathscr H(D)$, then 
\begin{equation}
\label{eq5.3}
|u|_{\mathscr H^\varphi_\nu(D)}=\sup_{U\subset\subset D}\int_D\int_{U^c}\varphi(u(z))\, P_U(x,dz)\,\nu(dx).
\end{equation}
\item If $u\in\mathscr H^{(s)}(D)$, then 
\[
|u|_{\mathscr H^\varphi_{\nu;h}(D)}=
\sup_{U\subset\subset D}\int_D\int_{U^c}\varphi\Big(\frac{u(z)}{h(z)}\Big)\frac{h(z)}{h(x)}\, P_U(x,dz)\,\nu(dx).
\]
\end{enumerate}
\end{corollary}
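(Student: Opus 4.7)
The plan is to deduce both parts from the Hardy--Stein identities in Theorems \ref{th3.1} and \ref{th4.1} by showing that the integrand inside the supremum, viewed as a function of $U$, is monotone nondecreasing in $U$ along relatively compact open subsets of $D$. Granting this, the sup in the definition of $|u|_{\mathscr H^\varphi_x(D)}$ coincides with the limit along any exhaustion of $D$, and the sup on the right-hand side of (\ref{eq5.3}) does too, so the claim reduces to exchanging a monotone limit with $\int_D\,\nu(dx)$, which is the monotone convergence theorem.

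For part (i), I would fix $x\in D$ and use the identity (\ref{eq1.4}) to write
\[
\int_{U^c}\varphi(u(z))\,P_U(x,dz)=\varphi(u(x))+I_1(U,x)+I_2(U,x)+I_3(U,x),
\]
where each $I_j(U,x)=\int_U G_U(x,z)\,d\mu_j(z)$ for a nonnegative Borel measure $\mu_j$ depending on $u$ and $\varphi$ but not on $U$. Nonnegativity of $\mu_j$ uses $g\ge 0$ (by convexity of $\varphi$ and (\ref{eq1.2})), nonnegativity of the Bregman divergence $F_\varphi$, and nonnegativity of $J$ and $\kappa$. The monotonicity $U_1\subset U_2\subset\subset D\Rightarrow \mathbf{1}_{U_1}(\cdot)G_{U_1}(x,\cdot)\le \mathbf{1}_{U_2}(\cdot)G_{U_2}(x,\cdot)$ is immediate from $\tau_{U_1}\le \tau_{U_2}$ and the probabilistic representation $R^U f(x)=\mathbb E_x\int_0^{\tau_U}f(X_t)\,dt$, so each $I_j$, and hence the whole expression, is monotone nondecreasing in $U$. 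Fixing an exhaustion $U_n\subset\subset U_{n+1}\subset\subset D$ with $\bigcup_n U_n=D$, monotonicity yields pointwise in $x$
\[
|u|_{\mathscr H^\varphi_x(D)}=\lim_{n\to\infty}\int_{U_n^c}\varphi(u(z))\,P_{U_n}(x,dz),
\]
and a second application of monotone convergence moves this limit through $\int_D\,\nu(dx)$. Since every $U\subset\subset D$ is eventually contained in some $U_n$ and since the doubly integrated quantity is monotone nondecreasing in $U$, the resulting limit equals the supremum over all $U\subset\subset D$, giving (\ref{eq5.3}).

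Part (ii) follows from a parallel argument, with (\ref{eq4.1}) of Theorem \ref{th4.1} replacing (\ref{eq1.4}): dividing (\ref{eq4.1}) through by $h(x)$ expresses the conditional integrand as $\varphi(u(x)/h(x))$ plus two nonnegative terms of the form $\int_U G_U(x,z)(\cdots)\,d\mu(z)$, so monotonicity in $U$ persists and the two applications of monotone convergence proceed verbatim. The main (and essentially only) point to check is the joint monotonicity $U\mapsto G_U$ together with the nonnegativity of every remainder term in the Hardy--Stein decomposition; both are built into the identities already established, so there is no substantive obstacle—the corollary is a packaging of those identities made legitimate by pointwise monotonicity and Fubini--Tonelli.
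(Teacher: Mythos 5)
Your proof is correct and follows essentially the same route as the paper: both rest on the positivity of every remainder term in the Hardy--Stein identity and the monotonicity of $U\mapsto\int_U G_U(x,\cdot)\,d\mu$ (the paper packages this via Theorem \ref{th5.3} and then integrates against $\nu$, while you run the exhaustion and the two monotone-convergence steps explicitly). The only cosmetic point is that for the singular parts of the measures $\mu_j$ (e.g.\ $\mu^c_{\langle u\rangle}$) the monotonicity in $U$ is cleanest via the Revuz correspondence, $\int_U G_U(x,z)\,\mu_j(dz)=\BE_x A^{\mu_j}_{\tau_U}$, rather than via the representation of $R^U f$ for functions $f$.
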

\begin{proof}
By Theorem \ref{th5.3}, the left-hand side of (\ref{eq5.3}) is the integral on $D$  with respect to  $\nu(dx)$
of the right-hand side of (\ref{eq5.8}). But by Theorem \ref{th3.1} and the argument from the proof of Theorem \ref{th5.3},  this integral 
is the supremum over $U\subset\subset D$ of the integrals on $U$ with respect to $\nu(dx)$
of the right-hand side of (\ref{eq1.4}), i.e.  is equal to the right-hand side of (\ref{eq5.3}). This shows (i). The proof of (ii) is similar. We use Theorem \ref{th5.4} instead of Theorem \ref{th5.3} and (\ref{eq4.1}) instead of (\ref{eq1.4}).
\end{proof}

Taking $\varphi(x)=|x|^p$ with $p>1$ and integrating  \eqref{eq5.8}  and \eqref{eq5.9} against $\nu(dx)$ we get the 
following result.

\begin{corollary}
Let $p>1$, $\nu$ be a probability measure on $D$ and $h\in \mathscr H^{(s)}(D)$
be strictly positive in $D$. 
\begin{enumerate}[\rm(i)]
\item For any $u\in\mathscr H(D)$,
\begin{align*}
&\|u\|_{\mathscr H^p_\nu(D)}=\int_D|u(x)|^p\,\nu(dx)+
\frac{p(p-1)}{2}\int_D\int_DG_D(x,z)|u(z)|^{p-2}\,\mu_{\langle u\rangle}^c(dz)\,\nu(dx)\nonumber\\
&+\int_D\int_DG_D(x,z)\Big(\int_{E}2F_p(u(z),u(y))J(dz,dy)
+\int_D\int_EF_p (u(z),0)\,k(dz)\Big)\,\nu(dx).
\end{align*}
\item For any $u\in\mathscr H^{(s)}(D)$,
\begin{align*}
\|u\|_{\mathscr H^p_{\nu;h}(D)} &=\int_D\Big|\frac{u(x)}{h(x)}\Big|^{p}\,\nu(dx)\nonumber\\
&\quad+
\frac{p(p-1)}{2}\int_D\int_DG_D(x,z)\frac{h(z)}{h(x)}
\Big|\frac{u(z)}{h(z)}\Big|^{p-2}\,\mu_{\langle u/h\rangle}^c(dz)\,\nu(dx)
\nonumber\\
&\quad+2\int_D\int_DG_D(x,z)\int_{E}F_p
\Big(\frac{u(z)}{h(z)},\frac{u(y)}{h(y)}\Big)\frac{h(y)}{h(x)}J(dz,dy)\,\nu(dx).
\end{align*}
\end{enumerate}
\end{corollary}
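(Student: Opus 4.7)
The plan is a direct computation: substitute $\varphi(x)=|x|^p$ into Theorems \ref{th5.3} and \ref{th5.4}, then integrate the resulting pointwise identities against $\nu(dx)$.

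First I verify that $\varphi(x)=|x|^p$ with $p>1$ meets the hypothesis (\ref{eq1.2}). Its left derivative $\varphi'_-(x)=p\,\sgn(x)|x|^{p-1}$ is continuous on $\BR$ (since $p-1>0$), and its distributional derivative equals the absolutely continuous measure $\mu_\varphi(dx)=p(p-1)|x|^{p-2}\fch_{\{|x|>0\}}\,dx$. Hence (\ref{eq1.2}) holds with $g(x)=p(p-1)|x|^{p-2}\fch_{\{|x|>0\}}$, and by direct computation the Bregman divergence $F_\varphi$ coincides with the function $F_p$ recalled in the Introduction. Consequently the hypotheses of Theorems \ref{th5.3} and \ref{th5.4} are in force.

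For part (i), Theorem \ref{th5.3} with this choice of $\varphi$ gives, at every $x\in D$, the pointwise identity
\begin{align*}
|u|_{\mathscr H^p_x(D)} &= |u(x)|^p + \frac{p(p-1)}{2}\int_D G_D(x,z)|u(z)|^{p-2}\,\mu^c_{\langle u\rangle}(dz) \\
&\quad + 2\int_D G_D(x,z)\int_E F_p(u(z),u(y))\,J(dz,dy)+\int_D G_D(x,z)F_p(u(z),0)\,\kappa(dz).
\end{align*}
By convexity $F_p\ge 0$, while $|u(z)|^{p-2}$, $G_D$, $J$ and $\kappa$ are nonnegative. Tonelli's theorem therefore licenses integrating this identity against $\nu(dx)$ and freely interchanging the order of integration, which yields the formula asserted in part (i).

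For part (ii), I apply Theorem \ref{th5.4} with $\varphi(x)=|x|^p$, divide through by $h(x)>0$, and integrate against $\nu(dx)$; Tonelli applies once more since all terms on the right of (\ref{eq5.9}) are nonnegative. The factors $h(z)/h(x)$ and $h(y)/h(x)$ in the stated formula arise directly from the $1/h(x)$ produced by this division, combined with the $h(z)$ (resp. $h(y)$) weights already present in (\ref{eq5.9}). The main concern is not analytic but purely bookkeeping: keeping the factors $h$, $1/h(x)$ in the right places and identifying $\mu_\varphi$ and $F_\varphi$ with their explicit forms $p(p-1)|x|^{p-2}\,dx$ and $F_p$ for $\varphi(x)=|x|^p$.
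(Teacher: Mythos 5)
Your proposal is correct and is exactly the paper's argument: the authors simply specialize \eqref{eq5.8} and \eqref{eq5.9} to $\varphi(x)=|x|^p$ (so that $g(x)=p(p-1)\fch_{\{|x|>0\}}|x|^{p-2}$ and $F_\varphi=F_p$) and integrate against $\nu(dx)$, with Tonelli justified by the nonnegativity of all terms just as you note. The only bookkeeping point worth flagging is that \eqref{eq5.9} as printed has $\varphi(u(x)/h(x))$ rather than $h(x)\varphi(u(x)/h(x))$ as its first term, so dividing it literally by $h(x)$ would not reproduce the first term of part (ii); your description of where the $h(z)/h(x)$ and $h(y)/h(x)$ factors come from is consistent with the corrected form of \eqref{eq5.9} obtained from \eqref{eq4.1}.
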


\begin{remark}
By \cite[Lemma 6]{BDL}, for every $p>1$, 
\[
F_p(a, b) \asymp(b-a)^2(|a| \vee|b|)^{p-2},\quad a,b\in\BR.
\]
As a result one  may effectively  estimate the $L^p$-Hardy space norm of a harmonic function.
\end{remark}

We close this section with a generalization of  Littlewood--Paley type estimate
for square functions. We start with introducing
the so called {\em Littlewood--Paley square function}  and its conditional version.
For  $u\in\mathscr H(D)$ we set
\[
\frak q_u(x):=\Big( \int_DG_D(x,y)\,\mu_{\langle u\rangle}(dy)\Big)^{1/2}
=\Big( \int_D\int_0^\infty 
p_D(t,x,y)\,\mu_{\langle u\rangle}(dy)\,dt\Big)^{1/2},\quad x\in D,
\]
and for  $u,h\in\mathscr H^{(s)}(D)$ such that   $h$ is strictly positive in $D$ we set
\[
\frak q^h_u(x):=\Big( \int_DG_D(x,y)\frac{h(y)}{h(x)}\,\mu_{\langle u/h\rangle}(dy)\Big)^{1/2}=\Big( \int_D\int_0^\infty p_D(t,x,y)\frac{h(y)}{h(x)}\,\mu_{\langle u/h\rangle}(dy)\,dt\Big)^{1/2}.
\]

In the proof of the next theorem we will use the fact that for any open $V\subset\subset D$ and $x\in V$,
\begin{align}
\label{eq5.4}
\mathbb E_x[u(X)]_{\tau_V}&=\int_VG_V(x,y)\,\mu^c_{\langle u\rangle}(dy)
+ \int_VG_V(x,y)|u(y)-u(z)|^2J(dy,dz)\nonumber\\
&\quad+\int_{V}G_V(x,y)|u(y)|^2\,\kappa(dy)
=\int_VG_V(x,y)\,\mu_{\langle u\rangle}(dy).
\end{align}
This follows from \cite[(5.3.3), (5.3.9), (5.3.10)]{FOT} and (\ref{eq2.2}) . Similarly, we have
\begin{align}
\label{eq5.5}
h(x)\BE^h_x[u(X)]_{\tau_V}
&=\int_DG_D(x,y)h(y)\,\mu^c_{\langle u/h\rangle}(dy)\\&\quad
+ \int_DG_D(x,y)\left|\frac{u(y)}{h(y)}-\frac{u(z)}{h(z)}\right|^2h(z)J(dy,dz)
\nonumber\\
&=\int_DG_D(x,y)h(y)\,\mu_{\langle u/h\rangle}(dy).
\end{align}

\begin{theorem}[Littlewood--Paley type estimates]
\label{th5.8}
Let  $p\ge 2$. 
\begin{enumerate}[\rm(i)] 
\item There exists $c_p>0$ such that for any   $u\in\mathscr H(D)$,  
\[
|\frak q_u(x)|\le c_p \|u\|_{\mathscr H^{p}_x(D)},\quad x\in D.
\]
\item There exists $c_p>0$ such that for any $u,h\in\mathscr H^{(s)}(D)$,
with strictly positive $h$,
\[
|\frak q^h_u(x)|\le c_p \|u\|_{\mathscr H^{p}_{x;h}(D)},\quad x\in D.
\]
\end{enumerate}
\end{theorem}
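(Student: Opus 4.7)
The plan is to realize $\frak{q}_u(x)^2$ as the expected terminal quadratic variation $\BE_x[M]_{\tau_D}$ of the martingale $M_t:=u(X_{t\wedge\tau_D})$, and then to control it by the $p$-th moment of $M_{\tau_D}$ (i.e., the Hardy norm) via the Burkholder--Davis--Gundy inequality and Jensen's inequality. Transferring the same idea to the Doob $h$-transformed process will give (ii).

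For (i), I fix $u\in\mathscr H(D)$ and an exhausting sequence $V_n\subset\subset V_{n+1}\subset\subset D$ with $\bigcup_n V_n=D$. By Proposition 2.2, $M^n_t:=u(X_{t\wedge\tau_{V_n}})$ is a uniformly integrable $\BP_x$-martingale for every $x\in V_n$, and identity (5.4) gives $\BE_x[M^n]_{\tau_{V_n}}=\int_{V_n}G_{V_n}(x,y)\,\mu_{\langle u\rangle}(dy)$. Since $G_{V_n}\uparrow G_D$, monotone convergence yields $\BE_x[M^n]_{\tau_{V_n}}\to\frak{q}_u(x)^2$ as $n\to\infty$. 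The BDG inequality combined with Doob's maximal inequality gives, for $p>1$,
$$
\BE_x[M^n]_{\tau_{V_n}}^{p/2}\le C_p\,\BE_x|u(X_{\tau_{V_n}})|^p=C_p\int_{V_n^c}|u(z)|^p\,P_{V_n}(x,dz)\le C_p\,\|u\|_{\mathscr H^p_x(D)}.
$$
Since $p/2\ge 1$, Jensen's inequality yields $\BE_x[M^n]_{\tau_{V_n}}\le(C_p\|u\|_{\mathscr H^p_x(D)})^{2/p}$. Letting $n\to\infty$ and taking square roots gives (i).

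For (ii), the same pattern goes through with $\BP_x$ replaced by $\tilde\BP^h_x$, the law of the Doob $h$-transform of the part of $\BM$ on $D$. As noted in the proof of Theorem 4.1, $u/h$ is $\tilde\BP^h_x$-harmonic in $D$, so $N^n_t:=(u/h)(X_{t\wedge\tau_{V_n}})$ is a uniformly integrable $\tilde\BP^h_x$-martingale. Formula (5.5) (read with the exhaustion $V_n$ in place of $D$) identifies $h(x)\,\tilde\BE^h_x[N^n]_{\tau_{V_n}}$ with $\int_{V_n}G_{V_n}(x,y)\,h(y)\,\mu_{\langle u/h\rangle}(dy)$, which increases to $h(x)\,\frak{q}^h_u(x)^2$. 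Applying BDG, Doob and Jensen under $\tilde\BP^h_x$, and using the $h$-transform identity
$$
\tilde\BE^h_x|(u/h)(X_{\tau_{V_n}})|^p=\int_{V_n^c}\Big|\frac{u(z)}{h(z)}\Big|^p\frac{h(z)}{h(x)}\,P_{V_n}(x,dz)\le\|u\|_{\mathscr H^p_{x;h}(D)},
$$
one obtains (ii).

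The main obstacle is verifying that (5.5) really expresses the expected quadratic variation of $N^n$ under $\tilde\BP^h_x$ and that BDG and Doob's inequality apply on the transformed side. This requires the analogue of (5.4) for the $h$-transformed Hunt process, which follows from the Revuz correspondence relative to the transformed process together with the membership $u/h\in\dot{\mathscr D}_{\mathrm{loc}}(\EE^D)$ guaranteed by Lemma 4.1 and Theorem 2.3. A secondary subtlety is the passage to the limit $V_n\uparrow D$: one must know that $[M^n]_{\tau_{V_n}}\uparrow [M]_{\tau_D}$ $\BP_x$-a.s., which follows since the quadratic variations are monotone in the stopping time.
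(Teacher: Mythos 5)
Your argument is correct and is essentially the paper's own proof: identify $\frak q_u(x)^2$ (resp.\ $h(x)\frak q^h_u(x)^2$) via \eqref{eq5.4} (resp.\ \eqref{eq5.5}) as the limit over an exhaustion $V_n\nearrow D$ of $\BE_x[u(X)]_{\tau_{V_n}}$ (resp.\ $h(x)\BE^h_x[(u/h)(X)]_{\tau_{V_n}}$), bound it by Burkholder--Davis--Gundy, Doob and Jensen, and pass to the limit; the paper uses Fatou where you use monotone convergence, which is immaterial. The only thing to tidy is the exponent bookkeeping in the last step: with $\|u\|_{\mathscr H^p_x(D)}$ understood as the $p$-th root of $\sup_U\int_{U^c}|u|^p\,P_U(x,dz)$ (as the homogeneity of the asserted inequality requires), the chain should read $(\BE_x[M^n]_{\tau_{V_n}})^{p/2}\le C_p\|u\|_{\mathscr H^p_x(D)}^p$, whence $\frak q_u(x)\le C_p^{1/p}\|u\|_{\mathscr H^p_x(D)}$, rather than the bound $(C_p\|u\|_{\mathscr H^p_x(D)})^{1/p}$ that your displayed inequalities literally yield.
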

\begin{proof}
By Burkholder--Davis--Gundy inequality and  Doob's $L^p$-inequality,  for any $V\subset\subset D$ and $x\in V$ we have
\begin{align}
	\label{eq5.6}
(\mathbb E_x[u(X)]_{\tau_V})^{p/2}\le \BE_x[u(X)]^{p/2}_{\tau_V}&
\le c'_p\BE_x \sup_{t\le\tau_V}|u(X_t)|^p\nonumber\\
&\le c_p\BE_x |u(X_{\tau_V})|^p\le c_p\| u\|^p_{\mathscr H^p_x(D)}.
\end{align}
Therefore letting $V\nearrow D$ and using (\ref{eq5.4}) and Fatou's lemma we obtain
(i). Since (\ref{eq5.6}) holds with 
$\BE_x$ replaced by $\BE^h_x$, using (\ref{eq5.5}) we get  (ii).
\end{proof}

\begin{corollary}
Let $p\ge 2$. Then there exists $c_p>0$ such that 
\[
\|\frak q_u\|_{L^p(\nu)}\le c_p \|u\|_{\mathscr H^{p}_\nu(D)},\,\, u\in\mathscr H(D),\qquad 
\|\frak q^h_u\|_{L^p(\nu)}\le c_p \|u\|_{\mathscr H^{p}_{\nu;h}(D)},\,\, u\in\mathscr H^{(s)}(D)
\]
for any probability  measure $\nu$ on $D$.
\end{corollary}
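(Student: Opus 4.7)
The plan is to integrate the pointwise Littlewood--Paley bounds of Theorem \ref{th5.8} against the probability measure $\nu(dx)$, raised beforehand to the $p$th power.

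For the unconditional estimate, fix $u\in\mathscr H(D)$. By Theorem \ref{th5.8}(i), $\frak q_u(x)^p\le c_p^p\,\|u\|_{\mathscr H^p_x(D)}^p$ for every $x\in D$. Integrating against $\nu(dx)$ and invoking the very definition of the integrated Hardy norm (applied to $\varphi(t)=|t|^p$) gives
\[
\|\frak q_u\|_{L^p(\nu)}^p=\int_D\frak q_u(x)^p\,\nu(dx)\le c_p^p\int_D\|u\|_{\mathscr H^p_x(D)}^p\,\nu(dx)=c_p^p\,\|u\|_{\mathscr H^p_\nu(D)}^p.
\]
Taking $p$th roots yields the first inequality with the same constant $c_p$ as in Theorem \ref{th5.8}. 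The conditional estimate is proved identically, substituting Theorem \ref{th5.8}(ii) for (i) and the conditional Hardy quasi-norm $\|u\|_{\mathscr H^p_{x;h}(D)}$ for $\|u\|_{\mathscr H^p_x(D)}$ (the $h$-transform structure is already absorbed in the pointwise bound).

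The only point requiring a brief remark is the Borel measurability in $x$ of the integrands, needed to justify the application of Fubini's theorem. The map $x\mapsto\frak q_u(x)$ is Borel by joint measurability of the Green function $G_D$ and nonnegativity of $\mu_{\langle u\rangle}$. For the map $x\mapsto\|u\|_{\mathscr H^p_x(D)}^p=\sup_{U\subset\subset D}\int_{U^c}|u(z)|^p\,P_U(x,dz)$, one observes that by the Hardy--Stein identity of Theorem \ref{th3.1} the quantity under the supremum is monotone nondecreasing in $U$, so the supremum reduces to one along any countable exhaustion $U_n\nearrow D$ of Borel functions of $x$ and is therefore Borel.

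No substantive obstacle arises here: once Theorem \ref{th5.8} is in hand, this corollary is essentially a one-line integration, and the constant $c_p$ is inherited unchanged from the pointwise statement.
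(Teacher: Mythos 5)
Your proof is correct and is exactly the argument the paper intends (the corollary is stated without proof as an immediate consequence of Theorem \ref{th5.8}): raise the pointwise bound to the $p$th power and integrate against $\nu$, with the measurability/monotonicity remark being a reasonable extra precaution. The only caveat is notational: your identity $\int_D\|u\|_{\mathscr H^p_x(D)}^p\,\nu(dx)=\|u\|_{\mathscr H^p_\nu(D)}^p$ presupposes that the $L^p$-Hardy norms are the $p$th roots of the quantities $|u|_{\mathscr H^\varphi_x(D)}$ and $|u|_{\mathscr H^\varphi_\nu(D)}$ with $\varphi(t)=|t|^p$, which is the only convention under which the stated inequality is homogeneous, so this is the right reading.
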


\section{Examples} 
\label{sec5}

In the examples below, $E=\BR^d$, $m$ is the $d$-dimensional Lebesgue measure, $(\cdot,\cdot)$ is the usual inner product in $\BR^d$,  $D$ is a domain in $\BR^d$ and $U\subset\subset D$.

\subsection{Diffusions with jumps}

\begin{example}
(L\'evy operators). Let $a$ be a symmetric $d$-dimensional matrix such that $\det a>0$  and $\nu$ be a L\'evy measure on $\BR^d$, i.e. a measure on the Borel subsets of $\BR^d$ such that $\nu(\{0\})=0$ and $\int_{\BR^d}(1\wedge|y|^2)\,\nu(dy)<\infty$. 
Consider the form
\[
\EE(u,v)=\frac{1}{2}(a\nabla u,\nabla v)+\int_{\BR^d\times\BR^d\setminus \frak d}
(u(x+y)-u(y))(v(x+y)-v(y))\,\nu(dy)\,dx
\]
with domain $\DDr(\EE)=\{u\in L^2(\BR^d;dx):\EE(u,u)<\infty\}$. Here the  jump term of the form has the jumping measure $J(dx,dy)=\nu(dy-x)\,dx$.
It is well known (see, e.g., \cite[Example 1.4.1]{FOT}) that $\EE$ is a regular Dirichlet form. The process $\BM$ corresponding to $\EE$ is the sum  of a $d$-dimensional  Wiener process with the covariance matrix $a$ and an independent  (purely jump) L\`evy process with the characteristic triplet $(0,\nu,0)$ (see \cite[Chapter 4]{S} for details). Therefore $\EE$ satisfies the absolute continuity condition. Furthermore, by Poincare's inequality,  the part of $\EE$ on $U\subset\subset D$ is transient (see \cite[Example 1.5.1]{FOT}). Therefore Theorems \ref{th3.1} and \ref{th3.3} apply. 

In the special case when $a/2$ is the $d$-dimensional identity matrix $I_d$ and $\nu(dy)=c_{d,\alpha}|y|^{-d-\alpha}$ with $\alpha\in(0,2)$ and suitably chosen constant $c_{d,\alpha}>0$, $\BM$ is the sum of a standard $d$-dimensional Wiener process and a rotationally symmetric  $\alpha$-stable process, and the generator of $\EE$ has the form $\Delta-(-\Delta)^{\alpha/2}$.
\end{example}

\begin{example}
(L\'evy-type operators). Let 
\begin{equation}
\label{eq5.1}
	\EE(u,v)=\EE^{(c)}(u,v)+\EE^{(j)}(u,v),
	\quad u,v\in\DDr(\EE):=H^1(\BR^d)\cap H^{\alpha/2}(\BR^d).
\end{equation}
We assume that  the strongly local part of $\EE$ is given by
\begin{equation}
\label{eq5.2}
\EE^{(c)}(u,v)=\frac12(a\nabla u\,\nabla v):=\frac{1}{2}\sum^d_{i,j=1}\int_{\BR^d}
	a_{ij}(x)\partial_{x_i}u(x)\partial_{x_j}v(x)\,dx,
	\quad u,v\in H^1(\BR^d),
\end{equation}
for some  symmetric $d$-dimensional matrix $a$ such that for some $\lambda\ge1$,
\[
	\lambda^{-1}|\xi|^2\le
	\sum^{d}_{i,j=1}a_{ij}(x)\xi_i\xi_j\le\lambda|\xi|^2,\quad x\in\BR^d,\,\xi=(\xi_1,\dots,\xi_d)\in\BR^d.
\]
As for the jump part, we assume that it has the  form
\begin{equation}
\label{eq6.3}
\EE^{(j)}(u,v)=\int_{\BR^d\times\BR^d\setminus \frak d}
(u(x)-u(y))(v(x)-v(y))J(x,y)\,dx\,dy,\quad u,v\in H^{\alpha/2}(\BR^d),
\end{equation}
with the jumping kernel $J$ defined by
\begin{equation}
\label{eq6.4}
J(x,y)=\frac{c(x,y)}{|x-y|^{d+\alpha}},\quad x,y\in\BR^d,\,x\neq y,
\end{equation}
for some $\alpha\in(0,2)$ and a  measurable function $c:\BR^d\times\BR^d\rightarrow\BR$  such that for any $x,y\in\BR^d$, $c(x,y)=c(y,x)$ and
$C^{-1}\le c(x,y)\le C$
for some $C\ge1$. In the above definitions, $H^1(\BR^d)$ is the usual Sobolev space of order 1 and
\[
H^{\alpha/2}(\BR^d)=\Big\{u\in L^2(\BR^d):\int_{\BR^d\times\BR^d}
\frac{(u(x)-u(y))^2}{|x-y|^{d+\alpha}}\,dx\,dy<\infty\Big\}.
\]
The form $(\EE,\DDr(\EE))$ is regular (see \cite{CK2} or \cite[Example 6.7.16]{CF}).  By Poincare's inequality,  the part of $\EE$ on $U\subset\subset D$ is transient. 
The generator of $\EE$  when restricted to the class $C^{\infty}_c(\BR^d)$ of smooth functions on $\BR^d$ with compact support is given by the formula
\[
Lu(x)=\frac12\sum_{i,j=1}^d\partial_{x_i}(a_{ij}(x)\partial_{x_j}u(x))
+\lim_{\varepsilon\downarrow0}\int_{\{y\in\BR^d:|y-x|>\varepsilon\}}
(u(y)-u(x))J(x,y)\,dx\,dy.
\]
Properties of the associated symmetric Hunt process  $\BM=(X,P_x)$ are thoroughly studied in \cite{CK2}. In particular, it is known that $\BM$ satisfies the absolute continuity condition. 
From Theorem \ref{th3.1} it follows that if  $u$ is a   harmonic function in $D$, then for all $x\in U$ and  $p>1$,
\begin{align}
\label{eq6.5}
E_x|u(X_{\tau_U})|^p&=|u(x)|^p+\frac{p(p-1)}{2}
\int_UG_U(x,z)\fch_{\{u(z)\neq0\}}|u(z)|^{p-2}(a\nabla u,\nabla u)(z)\,dz \nonumber\\
&\quad+\int_UG_U(x,z)\Big(\int_{\BR^d}F_p(u(z),u(y))J(z,y)\,dy\Big)\,dz.
\end{align}
Indeed, an elementary computation shows that for $f\in C^{\infty}_c(D)$ and bounded $\eta\in\DDr(\EE)$ we have
\[
2\EE^{(c)}(\eta,\eta f)-\EE^{(c)}(\eta^2,f)
=\sum^d_{i,j=1}\int_Df(x)a_{ij}(x)\partial_{x_i}\eta(x)\partial_{x_j}\eta(x)\,dx,
\]
so by  the definition of the energy measure,
\begin{equation}
\label{eq5.7}
\mu^c_{\langle u\rangle}(dx)=(a\nabla u,\nabla u)(x)\,dx.
\end{equation}
Therefore (\ref{eq6.5}) follows from Theorem \ref{th3.1} and (\ref{eq1.3}).
\end{example}

\subsection{Diffusions and pure jump processes}

\begin{example}
\label{ex6.3}
(Local operator). Let $\EE=\EE^{(c)}$, where $\EE^{(c)}$ is the form defined by (\ref{eq5.2}) with $a=I_d$. The associated process $\BM$ is a standard $d$-dimensional Wiener process. It is well known that  harmonic functions (relative to $\BM$) in $D$  are smooth in $D$. In \cite[Lemma 19]{BDL} it is proved that 
if $u, h\in\mathscr H^{(s)}(D)$  and $h$ is strictly positive in $D$, then for  any $x\in U$ and $p>1$,
\[
\mathbb E_x\frac{|u(X_{\tau_U})|^p}{h^{p-1}(X_{\tau_U})}
=\frac{|u(x)|^p}{h^{p-1}(x)}+\frac{p(p-1)}{2}
\int_UG_U(x,z)\Big|\frac{u(z)}{h(z)}\Big|^{p-2}\Big|\nabla\frac{u}{h}(z)\Big|^2
h(z)\,dz.
\]
This result follows from  Theorem \ref{th4.1} with $\varphi(x)=|x|^p$ and (\ref{eq5.7}) applied to  $u/h$ in place of  $u$.
\end{example}

\begin{example}
\label{ex6.4}
Let $\EE$ be the form of Example \ref{ex6.3}. By  Theorem \ref{th3.3} applied to the function $\varphi(x)=|x|$, for  
$u\in\mathscr H(D)$ we have
\[
\int_{\partial U}|u(z)|\, P_U(x,dz)
=|u(x)|+\int_DG_D(x,z)\sgn(u(z))\,l_0(dz),
\]
where $l_0$ is the measure in the Revuz correspondence with the local time at 0 of  the process $t\mapsto u(X_{t\wedge\tau_U})$, where $X$ is a standard $d$-dimensional Wiener process. It is worth comparing the above formula with
\[
\int_{\partial U}|u(z)|^p\, P_U(x,dz)
=|u(x)|^p+\frac{p(p-1)}{2}\int_DG_D(x,z)|u(z)|^{p-2}|\nabla u(z)|^2\,dz,
\]
which follows from Theorem \ref{th3.1} applied to $\varphi(x)=|x|^p$ with $p>1$.
\end{example}

\begin{example}
Let $\EE$ be the form of Example \ref{ex6.3} and $u\in\mathscr H^{(s)}(D)$. By  Theorem \ref{th5.8}, for any $p\ge2$ and any $u\in\mathscr H^{(s)}(D)$ and strictly positive
$h\in\mathscr H^{(s)}(D)$ we have
\begin{align*}
\Big(\int_DG_D(x,y)\frac{h(y)}{h(x)}
\Big|\nabla\frac{u}{h}(y)\Big|^2\,dy\Big)^{1/2}\le c_p \|u\|_{\mathscr H^{p}_{x;h}(D)}.
\end{align*}
\end{example}

\begin{example}
\label{ex5.2}
(Integral operators). (i) Let $(\EE^{(j)},H^{\alpha/2}(\BR^d))$ be the form defined by (\ref{eq6.3}) with  $J$ given by (\ref{eq6.4}). It is  regular (see \cite[Example 6.7.14]{CF}). The associated Hunt process $\BM$ is called symmetric $\alpha$-stable-like process on $\BR^d$. 
In \cite{CK1} it is proved (for more general jumping kernels  than (\ref{eq6.4})) that it satisfies the absolute continuity condition. Furthermore, from the bound $c(x,y)\ge C^{-1}$ and \cite[Theorem 1.6.4]{FOT} it follows that the part of $\EE^{(j)}$ on $U$ is transient. 
Therefore from Theorem \ref{th3.1} we get (\ref{eq6.5})  with $a=0$  for any harmonic function $u$ in $D$ and $\varphi$ defined by (\ref{eq1.3}). 
\smallskip\\
(ii) Let $\bar\nu$ be a L\'evy measure of the form $\nu(|x|)\, dx$ for some  non-increasing $\nu:[0,\infty)\rightarrow(0,\infty]$. 
Consider the form  $(\EE^{(j)},\DDr(\EE^{(j)}))$  with kernel  $J(x,y)=\nu(|x-y|)$  and domain
$\DDr(\EE^{(j)})=\{u\in L^2(\BR^d):\EE^{(j)}(u,u)<\infty\}$. Then $\BM$ is a L\'evy process with the characteristic triplet $(0,\bar\nu,0)$.  Assume additionally that $\int_{\BR^d}\nu(|x|)\,dx=\infty$. Then, by \cite[Theorem 27.7]{S}, $\BM$ satisfies the absolute continuity condition. Furthermore, if $U\subset\subset D$, then $U$ is bounded, so the part of $\EE$ on $U$ is transient.
Therefore (\ref{eq6.5}) with $a=0$ holds true for any harmonic function $u$ in $D$  and $\varphi$ defined by (\ref{eq1.3}).
This result strengthens \cite[Proposition 3.3]{BGPPR2} (see also \cite[Lemma 4.12]{BGPPR1} for  $p=2$). 
Identity  (\ref{eq6.5}) with $a=0$ is proved in \cite{BGPPR2} for  Lipschitz $U\subset\subset D$ and under some  additional assumptions on $\nu$. These additional assumptions on $\nu$ imply in particular that any harmonic function on $D$ is of class $C^2(D)$ (see \cite[Theorem 4.9]{BGPPR1}).
\end{example}

\begin{example}
\label{ex6.7}
Consider the form from Example \ref{ex5.2}(ii). Let $u$ be a  harmonic function in $D$. Since $\EE^{(c)}=0$, from Theorem \ref{th3.3}   it follows that for every $U\subset \subset D$ and every $x\in D$,
\begin{align*}
&\int_{U^c}|u(z)|P_U(x,dz)= |u(x)|\\
&\qquad+\int_UG_U(x,z)\int_{\BR^d}\big(|u(y)|-|u(z)|-\sgn(u(z))(u(y)-u(z))\big)\,\nu(|y-z|)\,dy\,dz.
\end{align*}
Consequently, if $0\in D$ and $u(x)=0$, then we have 
\[
\|u\|_{\mathscr H^1_0(D)} =\int_{D}G_{D}(0,z)\int_{\BR^d}\big(|u(y)|-|u(z)|-\sgn(u(z))(u(y)-u(z))\big)\,\nu(|y-z|)\,dy\,dz.
\]
\end{example}

\begin{example}
Let $\EE=\EE^{(j)}$  be the pure jump Dirichlet form with the jump kernel $J(x,y)=c_{d,\alpha}|x-y|^{-d-\alpha}$. It is well known that for a suitably chosen constant  $c_{d,\alpha}>0$ the associated generator is the fractional Laplace operator $-(-\Delta)^{\alpha/2}$ and the associated Hunt process  $\BM$ is  a rotationally symmetric $\alpha$-stable process. Therefore $\BM$  satisfies the absolute continuity condition and if $U\subset\subset D$, then   the part of $\EE$ on $U$ is transient.
Let $u$ be a harmonic function in $D$ and $h$ be a  strictly positive harmonic function in $D$. It is known that $u,h$ are continuous. Applying Theorem \ref{th4.1} to $\varphi(x)=|x|^p$ with $p>1$ we get
\[
E_x\frac{|u(X_{\tau_U})|^p}{h^{p-1}(X_{\tau_U})}
=\frac{|u(x)|^p}{h^{p-1}(x)}+\int_UG_U(x,z)\Big(\int_{\BR^d}h(y)
F_p\Big(\frac{u}{h}(z),\frac{u}{h}(y)\Big)|z-y|^{-d-\alpha}\,dy\Big)\,dz
\]	
for  any $U\subset\subset U$ and $x\in U$. For a different proof see \cite[Theorem 17]{BDL}. 
\end{example}

\section*{Acknowledgments}
T. Klimsiak was supported by the Polish National Science Centre (grant 2022/45/B/ ST1/01095).

\end{document}